 \newtheorem{theorem}{Theorem}[section]
 \newtheorem*{maintheorem}{Main Theorem}
      \newtheorem*{cor}{Corollary}
   \newtheorem{lemma}[theorem]{Lemma}
    \newtheorem{proposition}[theorem]{Proposition}  
      \newtheorem{question}[theorem]{Question} 
    \theoremstyle{definition}
\newtheorem{definition}[theorem]{Definition}
\newtheorem{remark}[theorem]{Remark}
\newtheorem{example}[theorem]{Example}
  \newcommand{\Z}{\ensuremath{{\mathbb{Z}}}}
\newcommand{\R}{\ensuremath{{\mathbb{R}}}}
  \newcommand{\N}{\ensuremath{{\mathbb{N}}}}
\definecolor{dred}{rgb}{.5,0,0} 
\definecolor{dgreen}{rgb}{0,.5,0} 
\definecolor{blue}{rgb}{0,0,0.5} 
\definecolor{black}{rgb}{0,0,0} 
\definecolor{vdgreen}{rgb}{0,.3,0} 
\definecolor{vdred}{rgb}{.3,0,0} 
\newcommand{\red}[1]{{\leavevmode\color{red}#1}}
\newcommand{\mbX}{\ensuremath{{\partial_* X}}}
\newcommand{\mbdX}{\ensuremath{{\partial_*^N X}}}
\newcommand{\mbY}{\ensuremath{{\partial_* Y}}}
\newcommand{\mbeY}{\ensuremath{{\partial_*^{N'} Y}}}
\newcommand{\mbh}{\ensuremath{{\partial_*h}}}
\newcommand{\diam}{\textrm{diam}}
\title{Quasi-Mobius Homeomorphisms of Morse boundaries}
\author{Ruth Charney, Matthew Cordes and Devin Murray}
\thanks {Charney was partially supported by NSF grant DMS-1607616.  Cordes was supported by a Zuckerman STEM Leadership Postdoctoral Fellowship.}
\begin{document}

\begin{abstract} The Morse boundary of a proper geodesic metric space is designed to encode hypberbolic-like behavior in the space.  A key property of this boundary is that a quasi-isometry between two such spaces induces a homeomorphism on their Morse boundaries.  In this paper we investigate when the converse holds.  We prove that for $X, Y$ proper, cocompact spaces, a homeomorphism between their Morse boundaries is induced by a quasi-isometry if and only if the homeomorphism is quasi-mobius and 2-stable.
\end{abstract}

\maketitle

\section{Introduction}

Boundaries of hyperbolic spaces have played a major role in the study of hyperbolic geometry and hyperbolic groups.  In particular, they provide a fundamental tool for studying the dynamics of isometries and rigidity properties of hyperbolic groups.

The effectiveness of this tool depends on a few key properties.  The first, is quasi-isometry invariance:  a quasi-isometry between two hyperbolic metric spaces induces a homeomorphism on their boundaries.  
In particular, this allows us to talk about the boundary of a hyperbolic group.  
Moreover, these homeomorphisms satisfy some particularly nice properties; they are quasi-mobius and quasi-conformal. Quasi-mobius is a condition that bounds the distortion of cross-ratios while quasi-conformal bounds the distortion of metric spheres.  These conditions have been studied in a variety of contexts by Otal, Pansu, Tukia, and Vaisala, \cite{O, Pan,T86,TV82, TV84, V} among others.  One of the most general theorems can be found in a 1996 paper of Paulin \cite{Pau} where he proves that if  $f : \partial X \to \partial Y$  is a homeomorphism between the boundaries of two proper, cocompact hyperbolic spaces, then the following are equivalent
\begin{enumerate}
\item $f$ is induced by a quasi-isometry $h : X \to Y$,
\item $f$ is quasi-mobius,
\item $f$ is quasi-conformal.
\end{enumerate}
We remark that Paulin's definition of quasi-conformal is different from the one used by Tukia and others. In this paper, we will focus on the quasi-mobius condition.

Boundaries can be defined for a variety of other spaces.  In particular, one can define a  boundary for any CAT(0) space in a similar manner to hyperbolic spaces.  Unfortunately, many of the nice properties of hyperbolic boundaries fail in this context.  First, quasi-isometries of CAT(0) spaces do not, in general, induce homeomorphisms on their boundaries. A well-known example of Croke and Kleiner \cite{CK} exhibits a group acting geometrically on two CAT(0) spaces with non-homeomorphic boundaries.  The missing property that leads to the failure of quasi-isometry invariance, is that in hyperbolic spaces, quasi-geodesics stay bounded distance from geodesics (with the bound depending only on the quasi-constants) while in CAT(0) spaces, this need not hold.  This property is known as the Morse property.  

In \cite{CS} the first author and H.~Sultan introduced a new type of boundary for CAT(0) spaces by restricting to only those geodesic rays satisfying the Morse property.  For CAT(0) spaces, the Morse property is equivalent to the contracting property and the authors originally called their boundary the ``contracting boundary".  Subsequently, their construction was generalized to arbitrary proper geodesic metric spaces by M.~Cordes \cite{Co} using the Morse property.  These boundaries have thus come to be known as Morse boundaries.  We denote the Morse boundary of $X$ by $\mbX$.  The key property of this boundary is quasi-isometry invariance; a quasi-isometry between two proper geodesic metric spaces induces a homeomorphism on their Morse boundaries \cite{CS, Co}.  Thus the Morse boundary is well-defined for \emph{any} finitely generated group (though it may be empty if the group has no Morse geodesics).  The Morse boundary is designed to behave like boundaries of hyperbolic groups and, hopefully, to have similar applications to more general groups. Evidence of this may be found in several papers including \cite{CS, Co, cordes-hume, cordes-durham, CaMa, Antolin:2016aa, Tran:2017aa, Mu}. For an overview of what is currently known about Morse boundaries, see Cordes' survey paper \cite{Co17}.
  
In the current paper, we will investigate the question of when a homeomorphism of Morse boundaries is induced by a quasi-isometry of the underlying groups.  We prove the following analogue of Paulin's theorem.

\begin{maintheorem} Let $X$ and $Y$ be proper, cocompact geodesic metric spaces and assume that $\mbX$ contains at least 3 points.  Then a homeomorphism $f : \mbX \to \mbY$ is induced by a quasi-isometry  
$h : X \to Y$ if and only if $f$ is 2-stable and quasi-mobius.  
\end{maintheorem}
We refer the reader to Section \ref{2-stable} for the definitions of quasi-mobius and 2-stable.

In particular, consider the case of a CAT(0) group.  In \cite{Mu}, building on work of Ballman and Buyalo \cite{BB}, the second author showed that if $G$ acts geometrically on a CAT(0) space $X$, then $\mbX$ contains at least 3 points if and only if $G$ is rank one and not virtually cyclic. Thus we have,
\begin{cor}  Let $G$ be a rank one CAT(0) group and $H$ any finitely generated group.  Then $H$ is quasi-isometric to $G$ if and only if there exists a homeomorphism $f : \partial_*G \to \partial_*H$ which is quasi-mobius and 2-stable.
\end{cor}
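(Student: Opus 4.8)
The plan is to deduce the Corollary from the Main Theorem, using only the Milnor--\v{S}varc lemma and the quasi-isometry invariance of the Morse boundary to translate between the abstract groups and concrete model spaces to which the Main Theorem applies. Fix a finite generating set for $H$ and let $X_H$ be the associated Cayley graph; it is a proper, cocompact geodesic metric space quasi-isometric to $H$. Since $G$ is a rank one CAT(0) group, fix a proper CAT(0) space $X$ on which $G$ acts geometrically, so that $X$ is proper, cocompact, geodesic, and (by Milnor--\v{S}varc) quasi-isometric to $G$. Because the Morse boundary is a quasi-isometry invariant \cite{CS, Co}, the boundaries $\partial_* G$ and $\partial_* H$ of the statement may be identified with $\partial_* X$ and $\partial_* X_H$ together with their quasi-mobius and $2$-stable structure, reducing everything to the pair $(X, X_H)$.

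Next I would check the standing hypothesis of the Main Theorem, that $\partial_* X$ contains at least three points. This is precisely the content of the cited theorem of the second author \cite{Mu} (building on Ballmann--Buyalo \cite{BB}): for a geometric action on a CAT(0) space the Morse boundary has at least three points if and only if the group is rank one and not virtually cyclic. As $G$ is rank one and not virtually cyclic, $\partial_* X$ (hence $\partial_* G$) has at least three points. (A virtually cyclic group has a two-point Morse boundary, so the non-virtually-cyclic condition is exactly what is needed, and may be taken as part of the meaning of ``rank one'' here.)

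With the hypothesis verified, both implications drop out of the Main Theorem. If $H$ is quasi-isometric to $G$, then composing the two Milnor--\v{S}varc quasi-isometries gives a quasi-isometry $h : X \to X_H$; this induces a homeomorphism $f : \partial_* X \to \partial_* X_H$, and the ``only if'' half of the Main Theorem shows $f$ is $2$-stable and quasi-mobius, which is the desired $f : \partial_* G \to \partial_* H$. Conversely, a $2$-stable quasi-mobius homeomorphism $f : \partial_* G \to \partial_* H$, viewed as $f : \partial_* X \to \partial_* X_H$, is by the ``if'' half of the Main Theorem induced by a quasi-isometry $X \to X_H$; chaining with the Milnor--\v{S}varc quasi-isometries shows $G$ is quasi-isometric to $H$.

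The only genuine point requiring care is the identification made in the first step: one must confirm that ``quasi-mobius'' and ``$2$-stable'' are invariants of the group rather than artifacts of a chosen model space, i.e.\ that both conditions (as set up in Section \ref{2-stable}) are preserved, up to a change of constants, under the homeomorphisms of Morse boundaries induced by quasi-isometries. Granting that invariance, the Corollary is a formal consequence of the Main Theorem and \cite{Mu}; all of the substantive work lives in those two results.
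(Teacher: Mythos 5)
Your proposal is correct and matches the paper's (implicit) argument exactly: the Corollary is stated as an immediate consequence of the Main Theorem together with the result of \cite{Mu} that a geometric action of a rank one, non-virtually-cyclic CAT(0) group yields a Morse boundary with at least three points, and your reduction via Milnor--\v{S}varc and quasi-isometry invariance of the Morse boundary is precisely how that deduction is meant to go. The one point you flag --- that ``quasi-mobius'' and ``2-stable'' are well-defined for the group independent of the model space --- is indeed settled by Theorem \ref{forward}, since composing with the boundary homeomorphism induced by a quasi-isometry preserves both properties up to a change of constants.
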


One might ask if there is also an equivalent quasi-conformality condition as in Paulin's theorem.  In general, however, the Morse boundary is neither metrizable nor compact, so it is not even clear what quasi-conformal should mean in this context.  However, a recent paper of Cashen and Mackey \cite{CaMa} introduces a metrizable topology on the Morse boundary which could potentially be used to define quasi-conformal.  
It would be interesting to know whether a full analogue of Paulin's theorem holds for this modified Morse boundary.  

An earlier version of this paper, dealing only with CAT(0) spaces, is available on the arXiv \cite{ChMu}.  The proofs in that setting are somewhat easier.
We have also recently learned that Sarah Mousley and Jacob Russel have proved an analogous result for Hierarchically Hyperbolic Groups (HHGs) \cite{MoRu}.

The first author would like to thank the Mathematical Sciences Research Institute in Berkeley and the Isaac Newton Institute for Mathematical Sciences in Cambridge for their support and hospitality during the writing of this paper.  Work at the Newton Institute was supported by EPSRC grant no EP/K032208/1.

\section{Preliminaries}\label{prelim}

\emph{We assume throughout the paper that $X$ is a proper geodesic metric space.}

\subsection{Morse triangles}  In this section we review some basic facts about Morse geodesics and define the Morse boundary.  The reader is referred to \cite{Co, BF, CS, Su} for more details.

\begin{definition}  A geodesic $\alpha$ in $X$ is \emph{Morse} if there exists a function $N: \R^+ \times \R^+ \to \R^+$ such that any $(\lambda, \epsilon)$-quasi-geodesic with endpoints on $\alpha$,  lies in the $N(\lambda, \epsilon)$-neighborhood of $\alpha$.  The function $N$ is called a \emph{Morse gauge} for $\alpha$ and we say that $\alpha$ is $N$-Morse.
 \end{definition}
 
 It is well known that if $X$ is hyperbolic, then there exists a Morse gauge $N$ such that every geodesic in $X$ is $N$-Morse.  At the other extreme, there are spaces $X$, such as the Euclidean plane, where no infinite geodesic is Morse.  In general, one has a mixture:  some infinite geodesics in $X$ are Morse and others not.  For example, if $X$ is the universal cover of the wedge of two tori, $T^2 \vee T^2$, then a geodesic $\alpha$ in $X$ is Morse if and only if there is a uniform bound on the amount of time $\alpha$ spends in any given flat.  That bound determines the Morse guage, the longer $\alpha$ spends in a flat, the larger the Morse guage $N$. 
 
Morse geodesics in a proper geodesic metric space $X$ behave much like geodesics in a hyperbolic metric space,
as indicated in the next few lemmas.  
 
\begin{lemma}[Equivalent Geodesics] \label{lem:equiv geodesics} Let $\alpha$ and $\beta$ be bi-infinite geodesics based at a point $p$.  Suppose  $\alpha$ is $N$-Morse and  the Hausdorff distance between $\alpha$ and $\beta$ is finite.  Then there exists a constant $C_N$ and a Morse gauge $N'$, depending only on $N$, such that $\beta$ is $N'$-Morse and the Hausdorff distance between $\alpha$ and $\beta$ is at most $C_N$.  
\end{lemma}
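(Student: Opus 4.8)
The plan is to compare $\beta$ to $\alpha$ through the single function $h(t) = d(\beta(t), \alpha)$, where I parametrize both rays from $p$ by arc length (running the argument separately on each of the two sub-rays if $\alpha,\beta$ are bi-infinite). Since $\beta(0)=\alpha(0)=p$ we have $h(0)=0$, and by hypothesis $D := \sup_t h(t)$ is finite. The engine of the proof is the following \emph{splicing} observation: if $a < t_0 < b$ are parameters with $h(a),h(b) \le \rho$, let $q_a,q_b \in \alpha$ be nearest points to $\beta(a),\beta(b)$ and form the path $\sigma = [q_a,\beta(a)] \cup \beta|_{[a,b]} \cup [\beta(b),q_b]$. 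The middle piece is an honest geodesic and the two flanking detours have length $\le \rho$, so $\sigma$ is a $(1,2\rho)$-quasigeodesic whose endpoints $q_a,q_b$ lie on $\alpha$. Applying the $N$-Morse property of $\alpha$, every point of $\sigma$ — in particular $\beta(t_0)$ — lies in the $N(1,2\rho)$-neighborhood of $\alpha$, so $h(t_0) \le N(1,2\rho)$. The crucial feature is that this bound depends on the heights $\rho$ at the \emph{flanks}, not on $h(t_0)$ and not on $D$; and the shared basepoint already supplies one flank of height $0$ at $a=0$.

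Consequently the whole Hausdorff estimate reduces to producing low flanks arbitrarily far out: if I can show $\liminf_{t\to\infty} h(t) \le \rho_0$ for some $\rho_0 = \rho_0(N)$, then for any $t_0$ I may take $a=0$ and a parameter $b>t_0$ with $h(b) \le \rho_0 + 1$, and splicing gives $h(t_0) \le N(1, 2\rho_0 + 2) =: C_N$, a constant depending only on $N$. I expect this near-return statement to be the main obstacle, and it is precisely where more than the one-shot Morse property is needed: the naive attempt to bound $h(t_0)$ by joining $\beta(t_0)$ to $\alpha$ with a single detour of its own height produces only the self-referential inequality $h(t_0)\le N(1,2h(t_0))$, which is vacuous for a general gauge. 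What rules out a geodesic $\beta$ running parallel to $\alpha$ at large distance is the \emph{contracting} behaviour of Morse geodesics, available from $N$ alone (cf. \cite{CS, Su, Co}): nearest-point projection to $\alpha$ of points far from $\alpha$ is coarsely constant, so an excursion of $\beta$ staying at height $\ge \rho_0$ over a long parameter interval would have its endpoints project to a bounded arc of $\alpha$ while $\beta$, being a genuine geodesic, advances linearly, contradicting finite Hausdorff distance once $\rho_0$ exceeds the contraction constant. I would make this precise by taking a putative last return time and estimating the projected length of the resulting infinite excursion against the geodesic length of $\beta$.

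Finally, once the uniform bound $d_{\mathrm{Haus}}(\alpha,\beta) \le C_N$ is in hand, the Morse gauge for $\beta$ follows from the same splicing idea. Given any $(\lambda,\epsilon)$-quasigeodesic $\gamma$ with endpoints on $\beta$, those endpoints lie within $C_N$ of $\alpha$; prepending and appending geodesic segments of length $\le C_N$ to push them onto $\alpha$ yields a $(\lambda, \epsilon + 2C_N)$-quasigeodesic with endpoints on $\alpha$ (the appended segments affect only the additive constant), which by the $N$-Morse property lies in the $N(\lambda, \epsilon + 2C_N)$-neighborhood of $\alpha$, hence in the $(N(\lambda,\epsilon+2C_N) + C_N)$-neighborhood of $\beta$. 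Thus $\beta$ is $N'$-Morse with $N'(\lambda,\epsilon) = N(\lambda,\epsilon+2C_N) + C_N$, a gauge depending only on $N$, which completes the proof.
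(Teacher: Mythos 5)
Your splicing reduction is sound (modulo the additive constant: a geodesic with two flanks of length at most $\rho$ attached is a $(1,4\rho)$-quasi-geodesic rather than $(1,2\rho)$, since both flanks can contribute), and your final step deriving the gauge $N'$ from the Hausdorff bound $C_N$ is the standard argument and is fine. You also correctly identify that the naive one-detour estimate is self-referential and that everything hinges on the ``near-return'' claim $\liminf_{t\to\infty} d(\beta(t),\alpha)\le \rho_0(N)$. The problem is that your proposed mechanism for near-return does not exist in this generality. The assertion that ``nearest-point projection to $\alpha$ of points far from $\alpha$ is coarsely constant'' is the \emph{strong} contraction property. That is equivalent to Morse only in CAT(0) spaces (the setting of \cite{CS} and of the earlier version \cite{ChMu}); for $N$-Morse geodesics in a general proper geodesic space one only gets \emph{sublinear} contraction, and the present paper explicitly warns that the projection-based approach ``does not generalize to Morse geodesics in more general geodesic metric spaces.'' With only sublinear contraction your excursion estimate does not close: an excursion of $\beta$ of parameter length $L$ staying at height between $\rho_0$ and $K=d_H(\alpha,\beta)$ has projection of diameter up to roughly $L\cdot\rho(K)/\rho_0$, where $\rho$ is the sublinear contraction function. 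This is linear in $L$, so there is no contradiction with $\beta$ advancing linearly unless $\rho_0>\rho(K)$ --- and $K$ is exactly the quantity you are not allowed to use. So the dependence on $K$ that the lemma is meant to eliminate reappears at the decisive step, and the proof as written does not establish the near-return claim with $\rho_0$ depending only on $N$.

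For comparison: the paper gives no argument of its own here; it simply invokes the proof of Proposition 2.4 of \cite{Co}, which is where the genuinely nontrivial input --- a bound depending only on $N$ and not on the a priori Hausdorff distance --- is established. Your outline would be a legitimate alternative route \emph{if} the near-return step were supplied by an argument valid for Morse (not just strongly contracting) geodesics; as it stands, that step is a gap rather than a routine verification. Two smaller points: you only bound $\sup_t d(\beta(t),\alpha)$, whereas the lemma asserts a bound on the Hausdorff distance, so you also need the companion statement (Lemma 2.1 of \cite{Co}) that an $N$-Morse geodesic lies in a controlled neighborhood of any quasi-geodesic joining two of its points; and for bi-infinite geodesics you should say why each ray of $\beta$ is at finite Hausdorff distance from a single ray of $\alpha$ before running the one-sided argument.
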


\begin{proof}  This follows from the proof of Proposition 2.4 in \cite{Co}. 
\end{proof}

Recall that a triangle is \emph{$\delta$-slim} if each side is contained in the $\delta$-neighborhood of the other two sides.

\begin{lemma}[Slim Triangles] \label{lem:slim triangles}  
	Let $T(a,b,c)$ be a triangle with vertices $a,b,c \in X \cup \mbX$ and suppose that all three edges of the triangle are $N$-Morse.  Then there exists a constant $\delta_N$ depending only on $N$ so that $T(a,b,c)$  is $\delta_N$-slim. 
\end{lemma}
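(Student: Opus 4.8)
The plan is to prove the statement first when $a,b,c\in X$ and then recover the general case by approximating ideal vertices along their edges. The heart of the argument is a projection construction that turns two sides of the triangle into a controlled quasi-geodesic, to which the defining Morse property of the third side can be applied.

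So suppose $a,b,c\in X$ and fix a point $x$ on the side $[a,b]$; I want to bound $d(x,[a,c]\cup[b,c])$. Let $p\in[a,b]$ be a nearest-point projection of $c$ to $[a,b]$, which exists because $X$ is proper, and assume, say, that $x$ lies on the subsegment $[p,b]$; the other case is symmetric. First I would record the elementary fact that, because $p$ realizes $d(c,[a,b])$ and any $v\in[p,c]$ satisfies $d(c,p)=d(c,v)+d(v,p)$, the point $p$ is also a nearest-point projection of every such $v$; that is, $d(v,[a,b])=d(v,p)$ for all $v\in[p,c]$. Using this I would show that the concatenation $\mu=[b,p]\cup[p,c]$ is a $(3,0)$-quasi-geodesic: for $u\in[b,p]$ with $d(u,p)=s$ and $v\in[p,c]$ with $d(v,p)=t$, the projection fact gives $d(u,v)\ge d(v,[a,b])=t$, while the triangle inequality gives $d(u,v)\ge s-t$, and splitting into the cases $t\ge s/2$ and $t<s/2$ yields $d(u,v)\ge\frac13(s+t)$. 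Since $\mu$ runs from $b$ to $c$, both endpoints lie on the $N$-Morse side $[b,c]$, so the Morse property forces $\mu\subseteq N_{N(3,0)}([b,c])$; in particular $x\in[p,b]\subseteq\mu$ satisfies $d(x,[b,c])\le N(3,0)$. Note this only used that the opposite side $[b,c]$ (respectively $[a,c]$ in the symmetric case) is $N$-Morse, not that $[a,b]$ itself is.

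Applying this construction with each of the three sides playing the role of $[a,b]$, and using that all three sides are $N$-Morse, gives $\delta_N=N(3,0)$ as a uniform slimness constant for the finite triangle. For the general case, if say $c\in\mbX$, I would approximate by finite triangles $T(a,b,c_n)$ with $c_n\to c$ along the ray $[a,c]$. The sides $[a,c_n]$ are subsegments of $[a,c]$, hence $N$-Morse; the point is to see that the new sides $[b,c_n]$ are $N'$-Morse with $N'$ depending only on $N$. This is where Lemma \ref{lem:equiv geodesics} enters: the rays $[a,c]$ and $[b,c]$ are asymptotic $N$-Morse geodesics, so they stay within Hausdorff distance $C_N$, placing $c_n$ within $C_N$ of $[b,c]$, and a short argument then bounds the Morse gauge of $[b,c_n]$ in terms of $N$ alone. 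With the gauges uniformly controlled, each finite triangle is $\delta_{N'}$-slim, and since $X$ is proper the segments $[a,c_n]$ and $[b,c_n]$ converge uniformly on compacta (Arzel\`a--Ascoli) to $[a,c]$ and $[b,c]$; as slimness is a closed condition it passes to the limit. The cases of two or three ideal vertices are handled the same way by approximating each ideal vertex along an incident edge.

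The main obstacle I anticipate is precisely this last reduction: controlling the Morse gauges of the approximating sides uniformly and ensuring the ideal-vertex approximations behave well, since the clean projection argument above presupposes finite vertices where nearest-point projections exist. The finite case itself is robust, being purely metric apart from the single application of the Morse property, so I expect the bookkeeping around ideal endpoints, rather than any genuinely new idea, to be the delicate part.
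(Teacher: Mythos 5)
Your proof is essentially correct and follows the same overall strategy as the paper's: handle finite triangles first, then reduce ideal triangles to finite ones by cutting off the ideal vertices, using the fact that asymptotic $N$-Morse rays are eventually uniformly close (the input from Proposition 2.4 of \cite{Co} that underlies Lemma \ref{lem:equiv geodesics}). The differences are worth noting. For the finite case you give a self-contained argument --- the nearest-point projection $p$ of $c$ on $[a,b]$ making $[b,p]\cup[p,c]$ a $(3,0)$-quasi-geodesic, whence slimness constant $N(3,0)$ --- where the paper simply cites Lemma 2.3 of \cite{Co}; your computation is correct and this is essentially the proof of that cited lemma, so nothing is lost. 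For the ideal case the paper truncates once, forms a geodesic hexagon with a central triangle $R$ whose sides it shows are $N''$-Morse, applies the finite case to $R$, and transfers slimness back to $T$ by a direct bounded-distance comparison; you instead take a sequence of finite approximating triangles and pass to a limit via Arzel\`a--Ascoli. Both routes need the same two lemmas (subsegments of Morse geodesics are uniformly Morse; geodesics at bounded Hausdorff distance from Morse geodesics are uniformly Morse), and the paper's single-truncation comparison avoids the limit entirely.

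Two places in your ideal-vertex step are thinner than they should be. First, in the limiting argument you must identify the subsequential limit of the segments $[b,c_n]$ with the \emph{given} side $[b,c]$ of $T$: the limit is a priori only some Morse ray from $b$ asymptotic to $c$, and you need Lemma \ref{lem:equiv geodesics} (uniform gauge in hand) to conclude it lies within bounded Hausdorff distance of $[b,c]$, which inflates $\delta_N$ by a controlled amount rather than leaving it unchanged; as written, ``slimness is a closed condition'' elides this. Second, when two or three vertices are ideal, approximating each ideal vertex along an incident edge produces a triangle whose sides are \emph{not} subsegments of the sides of $T$ (each approximating side is a new geodesic between points on two different sides of $T$), so the clean ``$[a,c_n]\subset[a,c]$'' bookkeeping breaks down and you are forced into exactly the paper's hexagon configuration, with peripheral triangles to control. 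Neither issue is a genuine obstruction --- the required estimates are the ones you already invoke --- but they are where the actual work of the general case lives.
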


\begin{proof}  For triangles with vertices in $X$, this follows from Lemma 2.3 of \cite{Co}.  That is, there exists $D_N$, depending only on $N$, such that interior triangles with $N$-Morse sides are $D_N$-slim.

Following Kent and Leininger's argument in Theorem 4.4 of \cite{Kent-Leininger}, we can extend this to ideal triangles by truncating the triangle at each ideal vertex.  For simplicity we will describe the argument in the case where all three vertices are on the boundary.  The same idea works when only some of the vertices are on the boundary.

It follows from the proof of Proposition 2.4 in \cite{Co} that for two $N$-Morse geodesic rays $\alpha$ and $\beta$ asymptotic to the same point on the boundary, there exists $L$, depending only on $N$, such that $\alpha$ and $\beta$ eventually lie within $L$ of each other, that is, for some $T_1,T_2$, the Hausdorff
distance from $\alpha([T_1, \infty))$ to $\beta([T_2, \infty))$ is at most $L$.  

Thus at each point $\{a,b,c\}$, we can truncate the triangle $T(a,b,c)$ by adding an edge of length at most $L$ to get a geodesic hexagon $P$ in $X$.   Then choose 3 vertices $a',b',c'$ of $P$  that form a central triangle $R(a',b',c')$ and three other peripheral triangles which share two sides with $P$ (see Figure \ref{fig:slim}). Each of these peripheral triangles contains one edge that is a segment of a side of $T$, one edge in $R$, and a third edge of length at most $L$.  By an elementary argument, subsegments of $N$-Morse geodesics are $N'$-Morse where $N'$ depends only on $N$, so the first of these edges is $N'$-Morse.  The union of the other two edges is a $(1,L)$-quasi-geodesic and is hence contained in the $N'(1,L)$-neighborhood of the first edge, in fact, by Lemma 2.1 of \cite{Co} they have Hausdorff distance $2N'(1,L)$. By Lemma 2.5 of \cite{CS} this implies that the sides of $R$ are $N''$-Morse where $N''$ depends only on $N',L$. 

Since $R$ has vertices in $X$, it is $D_{N''}$-slim,
and the sides of $P$ that are contained in $T$, lie in the $2N'(1,L)$-neighborhood of $R$.
Thus,taking $\delta_N = D_{N''} +4N'(1,L) + L$, we conclude that $T$ is $\delta_N$-slim.
\end{proof}

%%Why is it referring to this as Figure 2 ??
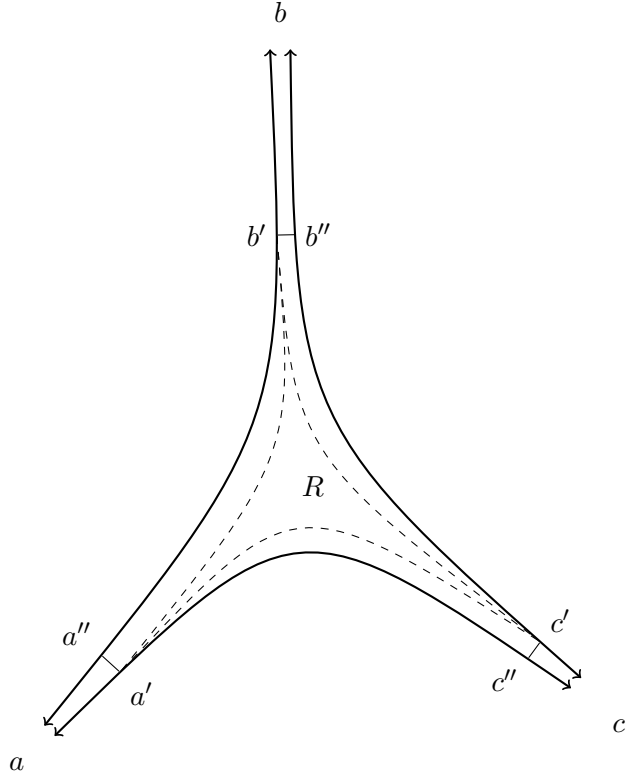
\begin{figure}
\begin{tikzpicture}

\node at (0,0) (a){};
\node at (3,9) (b){};
\node at (7,0.5) (c){};
\node at (3.2,4) (ctl1){};
\node at (3.2,3.6) (ctl2){};

\node at ($(a)-(0.5,0.5)$) {$a$};
\node at ($(b)+(0,0.5)$) {$b$};
\node at ($(c)+(0.5,-0.5)$) {$c$};

\draw[<->,thick] (a.west) ..controls ($(ctl1)-(0.1,0)$) .. ($(b.west)$) node [pos=0.8,scale=0.1](abm){} node [pos=0.085,scale=0.1](abm2){};
\draw[<->,thick] (b.east) ..controls (ctl1) .. (c.north) node [pos=0.95,scale=0.1](bcm){} node [pos=0.2,scale=0.1](bcm2){};
\draw[<->,thick] (c.west) ..controls ($(ctl1)-(0,1)$) .. (a.south) node [pos=0.9,scale=0.1](acm){} node [pos=0.055,scale=0.1](acm2){};

\draw[thin, dashed] (abm) ..controls ($(ctl2)-(0,0)$).. (bcm) node[pos=0.55,anchor=north east,black]{$R$} ;
\draw[thin, dashed] (acm)  ..controls ($(ctl2)-(0,0.4)$).. (bcm);
\draw[thin, dashed] (abm)  ..controls ($(ctl2)-(0,0)$)..  (acm);

\draw[thin] (abm) -- (bcm2);
\draw[thin] (bcm) -- (acm2);
\draw[thin] (acm) -- (abm2);

\node at (acm) [anchor=north west]{$a'$};
\node at (abm) [anchor= east]{$b'$};
\node at (bcm) [anchor=south west]{$c'$};

\node at (abm2) [anchor=south east]{$a''$};
\node at (bcm2) [anchor=west]{$b''$};
\node at (acm2) [anchor= north east]{$c''$};

\end{tikzpicture}
\caption{Ideal $N$-morse triangles are slim}
\label{fig:slim}
\end{figure}

\begin{lemma}[Morse Triangles]\label{lem:morse triangles}
	Given $N$ and a triangle $T(a,b,c)$ with vertices in $X \cup \mbX,$ there exists $N'$ such that if two sides $[a,b]$, $[a,c]$ are $N$-Morse, then the third side $[b,c]$  is $N'$-Morse.
\end{lemma}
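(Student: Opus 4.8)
The plan is to reduce to the case of finite vertices and then show that every quasi-geodesic with endpoints on $[b,c]$ stays uniformly close to $[b,c]$, using the two $N$-Morse sides $[a,b]$ and $[a,c]$ as a scaffold. First I would dispose of ideal vertices exactly as in the proof of Lemma \ref{lem:slim triangles}: truncate each vertex lying in $\mbX$ using the fact that two $N$-Morse rays asymptotic to a common boundary point eventually $L$-fellow-travel with $L=L(N)$. This replaces the ideal triangle by a genuine triangle $T(a',b',c')$ whose two distinguished sides are $N''$-Morse for some $N''=N''(N)$ and whose third side differs from a truncation of $[b,c]$ by segments of length at most $L$; by Lemma \ref{lem:equiv geodesics} it then suffices to treat $a,b,c\in X$ and to produce a gauge $N'=N'(N)$ so that every $(\lambda,\epsilon)$-quasi-geodesic with endpoints on $[b,c]$ lies in the $N'(\lambda,\epsilon)$-neighborhood of $[b,c]$.

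The geometric heart is the following claim, which I would establish first: there is $C=C(N)$ with $[b,c]\subseteq \mathcal{N}_C\big([a,b]\cup[a,c]\big)$ and, more precisely, if $m\in[b,c]$ is a point nearest to $a$ then the subsegment $[b,m]$ lies in $\mathcal{N}_C([a,b])$ and $[m,c]$ lies in $\mathcal{N}_C([a,c])$. The obstacle here is that one cannot simply invoke Lemma \ref{lem:slim triangles}, since that lemma presupposes all three sides are $N$-Morse, whereas Morse-ness of $[b,c]$ is exactly what is in question; breaking this apparent circularity is the main difficulty. I would break it using the contracting behaviour of an $N$-Morse geodesic: nearest-point projection onto an $N$-Morse geodesic is coarsely well defined and coarsely Lipschitz with constants depending only on $N$, a consequence of the slim-triangle estimate $D_N$ for \emph{genuine} geodesic triangles (which is available, as it needs only the known interior case). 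Projecting $[b,m]$ onto $[a,b]$ and bounding how far $[b,m]$ can stray before its projection collapses yields the $b$-side neighbourhood bound, and symmetrically for $[m,c]$ and $[a,c]$.

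Granting the claim, I would finish as follows. Let $\gamma$ be a $(\lambda,\epsilon)$-quasi-geodesic with endpoints $x,y$ on $[b,c]$ and let $z\in\gamma$; it suffices to bound $d(z,[b,c])$. Projecting $\gamma$ onto the $N$-Morse geodesics $[a,b]$ and $[a,c]$ and using the claim to transfer between these geodesics and $[b,c]$, one controls $d(z,[b,c])$ in terms of $N,\lambda,\epsilon$ alone: a point of $\gamma$ whose projection falls on $[a,b]$ is kept within $C$ of $[b,c]$ on the $b$-side by the claim, symmetrically on the $c$-side, while the contracting estimate prevents $\gamma$ from escaping far in the transition region near the foot $m$. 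This produces the desired gauge $N'$.

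A final application of Lemma \ref{lem:equiv geodesics} records that $N'$ may be taken to depend only on $N$ (and on $\lambda,\epsilon$), absorbing the auxiliary gauges $N''$ introduced during truncation. The step demanding the most care is the neighbourhood claim of the second paragraph: extracting a sufficiently strong projection estimate from the two Morse sides so as to control the third side \emph{without} assuming in advance that it is Morse, which is where the bound must be made uniform in the size of the triangle rather than depending on the configuration.
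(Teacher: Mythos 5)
Your reduction of the ideal case to the interior case by truncating at boundary vertices is exactly what the paper does, and the way you propose to pass from the truncated side back to $[b,c]$ is recoverable (though Lemma \ref{lem:equiv geodesics} is stated for bi-infinite geodesics based at a point; the paper instead applies the interior Morse-triangle lemma once more to the degenerate peripheral triangle $(b',c',c'')$ and then lets $b',c''$ run out to infinity). The divergence, and the problem, is in how you handle the interior case itself. The paper does not re-prove it: it simply cites Lemma~2.3 of \cite{Co}, which is precisely the statement that two $N$-Morse sides of a genuine geodesic triangle force the third side to be $N'$-Morse. You instead attempt to re-derive this from projection estimates, and that re-derivation has a genuine gap.

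Concretely: your argument rests on the claim that nearest-point projection onto an $N$-Morse geodesic is coarsely well defined and coarsely Lipschitz with constants depending only on $N$, justified ``as a consequence of the slim-triangle estimate $D_N$ for genuine geodesic triangles.'' That justification is circular. The slim-triangle estimate for interior triangles requires all sides (or at least two sides, which is the very lemma being proved) to be Morse, whereas the auxiliary triangles arising in your projection argument --- e.g.\ a point $x\in[b,c]$ together with two of its nearest points $p,q$ on $[a,b]$ --- have sides $[x,p]$, $[x,q]$ and subsegments of $[b,c]$ whose Morse gauges are exactly what is unknown. Moreover, the projection statement itself is delicate in this generality: in an arbitrary proper geodesic space an $N$-Morse geodesic is only \emph{sublinearly} contracting, not strongly contracting, so the diameter of a point's projection need not be bounded independently of its distance to the geodesic; the paper's own remark in Section \ref{cross-ratios} notes that the strong-contraction/projection approach available in the CAT(0) setting ``does not generalize to Morse geodesics in more general geodesic metric spaces.'' Finally, the closing step --- that ``the contracting estimate prevents $\gamma$ from escaping far in the transition region near the foot $m$'' --- is asserted rather than proved, and this is where the real case analysis lives. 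The efficient fix is to do what the paper does: quote the interior case from \cite{Co} and reserve the work for the truncation.
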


\begin{proof}
	For triangles with vertices in $X$, this is proved in Lemma 2.3 of \cite{Co}.  For an ideal triangle, following the proof of the Slim Triangle property above, we can truncate $T$ to form the triangle $R$ with vertices $a', b', c'$ and show that $[a', b']$ and $[a',c']$ are $N_1$-Morse.  Hence by Lemma 2.3 of \cite{Co}, $[b',c']$ is $N_2$-Morse, where $N_1, N_2$ depend only on $N$.  
	
Now apply Lemma 2.3 of \cite{Co} to the peripheral triangle formed by $b',c',c''$.  Since $[b',c']$ is $N_2$-Morse and $[c',c'']$ is length at most $L$, there exists $N_3$ depending only on $N_2$ and $L$, such that $[b',c'']$, the side contained in $[b,c]$, is $N_3$-Morse.  Since $b'$ and $c''$ can be chosen to be arbitrarily far out toward $b$ and $c$, it follows that $[b,c]$ is $N_3$-Morse.
\end{proof}

\subsection{The Morse boundary}

 We are now ready to define the Morse boundary.  
 
For two Morse rays $\alpha, \beta$ in $X$, say $\alpha \sim \beta$  if they have bounded Hausdorff distance 
and denote the equivalence class by $[\alpha]$.   The Morse boundary of $X$ consists of the set of equivalence classes of Morse rays.  To topologize this set, first choose a basepoint $p \in X$ and let $N$ be a Morse guage.  Set 
\begin{equation*} \mbdX_p= \{[\alpha] \mid \exists \beta \in [\alpha] \text{ that is an $N$--Morse geodesic ray with } \beta(0)=p\} \end{equation*} 
with the compact-open topology. These spaces are compact.  This topology is equivalent to one defined by a system of neighborhoods, $\{V_n(\alpha) \mid n \in \N \}$, defined as follows: $V_n( \alpha)$ is the set of $[\gamma] \in \mbdX_p$ such that   $d(\alpha(t), \gamma(t))< C_N$ for all $t<n$, where $C_N$ is the constant from Lemma \ref{lem:equiv geodesics}.

Let $\mathcal M$ be the set of all Morse gauges. Put a partial ordering on $\mathcal M$ so that  for two Morse gauges $N, N' \in \mathcal M$, we say $N \leq N'$ if and only if $N(\lambda,\epsilon) \leq N'(\lambda,\epsilon)$ for all $\lambda,\epsilon \in \N$.  Define the \emph{Morse boundary} of $X$ to be
 \begin{equation*} \mbX_p =\varinjlim_\mathcal{M} \mbdX_p 
 \end{equation*} 
with the induced direct limit topology, i.e., a set $U$ is open in $\mbX$ if and only if $U \cap \mbdX_p$ is open for all $N$.   It is shown in \cite{Co} that a change in basepoint results in a homeomorphic boundary, thus we normally omit the basepoint from the notation and denote the Morse boundary by $\mbX$.   

An alternate construction of the Morse boundary is given by the second author and Hume in \cite{cordes-hume}.   Define $X^{(N)}_p$ to be the set of all $y\in X$ such that there exists a $N$--Morse geodesic $[p,y]$ in $X$. By Proposition 3.2 in \cite{cordes-hume}, we know $X^{(N)}_p$ is $8N(3,0)$--hyperbolic in the Gromov 4-point definition of hyperbolicity.
Hence, we may consider its Gromov boundary, $\partial X^{(N)}_p$, and the associated visual metric $d_{(N)}$. We call the collection of boundaries $\left( \partial X^{(N)}_p, d_{(N)} \right)$ the \emph{metric Morse boundary} of $X$.  
(The reader is referred to \cite[Section 2.2]{BS} for a careful treatment of the sequential boundary of a $\delta$--hyperbolic space which is not necessarily geodesic.) 
 It is shown in \cite{cordes-hume} that there is a natural homeomorphism between $\partial X^{(N)}_p$ and $\mbdX_p$. 
 In particular, $\mbdX_p$ is compact.
 Thus the Morse boundary can also be defined as the direct limit of the metric spaces $\partial X^{(N)}_p$.

It follows from Lemma \ref{lem:morse triangles}, the Morse Triangle property,  that for any two points $a,b$ on $\mbdX_p$, there is a bi-infinite $N'$-Morse geodesic $\gamma$ connecting $a$ and $b$.  While the Morse guage $N$ depends on a choice of basepoint $p$, the Morse guage of the bi-infinite geodesic $\gamma$ does not.  In this paper we will primarily be concerned with the Morse guage for such bi-infinite geodesics.

Denote by $\mbX^{(n,N)}$, $n$-tuples of distinct points.  
$(a_1, a_2, \dots a_n)$ in $\mbX$ such that every bi-infinite geodesic from $a_i$ to $a_j$ is $N$-Morse. 
Let $(a,b,c) \in \mbX^{(3,N)}$.  By the Slim Triangle Property, for any ideal triangle $T(a,b,c)$, there exist points that lie within $\delta_N$ of all three sides of $T$.  While these points are by no means unique, we will show in the next Lemma that they form a bounded set.

\begin{lemma}\label{lem:E_K properties}
	 Let $(a,b,c) \in \partial_* X^{(3,N)}$. Set 
	 $$E_K(a,b,c)= \{ x \in X \mid \textrm{ $x$ lies within $K$ of all three sides of some $T(a,b,c)$}\}.$$  
	 For any  $K \geq \delta_N$, the following hold:
	\begin{enumerate}
		\item $E_K(a,b,c)$ is non-empty \label{center exists}
		\item $E_K(a,b,c)$ has bounded diameter $L$ depending only on $N$ and $K$.  \label{center bounded}
		\item For each vertex $v$ of $T(a,b,c)$, there exists points $p$ and $q$ on the two sides of $T$ emanating from $v$ such that $p,q \in E_K(a,b,c)$, $d(p,q) \leq \delta_N,$ and the Hausdorff distance from $[p,v]$ to $[q,v]$ is at most $\delta_N$. \label{ends eventually close}
	\end{enumerate}
\end{lemma}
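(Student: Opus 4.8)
The plan is to treat the three parts in order, using the Slim Triangle property (Lemma \ref{lem:slim triangles}) as the main engine and the Equivalent Geodesics lemma (Lemma \ref{lem:equiv geodesics}) to compare different choices of the triangle $T(a,b,c)$.

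For (\ref{center exists}), I would fix one ideal triangle $T(a,b,c)$ and parametrize the side $[a,b]$ by arclength. Let $A=\{t : d([a,b](t),[a,c])\le \delta_N\}$ and $B=\{t: d([a,b](t),[b,c])\le\delta_N\}$. Both are closed, and by the $\delta_N$-slimness of $T$ their union is all of $[a,b]$. The end of $[a,b]$ toward $a$ is a Morse ray asymptotic to $a$, so by the fellow-traveling fact for asymptotic $N$-Morse rays used in the proof of Lemma \ref{lem:slim triangles}, it eventually stays within bounded distance of $[a,c]$; hence $A$ contains a neighborhood of the $a$-end. On the other hand, since $a$ is distinct from $b$ and $c$, points of $[a,b]$ sufficiently far toward $a$ are arbitrarily far from $[b,c]$ (otherwise one would produce points of $[b,c]$ escaping to infinity yet accumulating at $a$), so $B$ does not contain the $a$-end; symmetrically $B$ contains the $b$-end. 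Since $[a,b]\cong\R$ is connected and covered by the two closed sets $A$ and $B$, they must intersect, and any $t\in A\cap B$ gives a point on $[a,b]$ within $\delta_N\le K$ of all three sides, so $E_K(a,b,c)\ne\emptyset$.

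For (\ref{center bounded}), I first remove the dependence on the choice of $T$. Any two admissible triangles have corresponding sides that are $N$-Morse bi-infinite geodesics with the same pair of endpoints, hence finite Hausdorff distance, so by Lemma \ref{lem:equiv geodesics} their sides are pairwise within $C_N$. Thus it suffices to bound $\diam$ of the set of points lying within $K'=K+C_N$ of all three sides of a single triangle $T$. Given such a point $x$, choose $p_{ab}\in[a,b]$ with $d(x,p_{ab})\le K'$; then $p_{ab}$ lies within $2K'$ of both $[a,c]$ and $[b,c]$. The heart of the matter is to show that the set of points on $[a,b]$ lying within $2K'$ of both other sides is a subsegment of bounded length depending only on $N$ and $K'$: travelling from $a$ toward $b$, the distance to $[b,c]$ is large near $a$ and small near $b$, and conversely for $[a,c]$, so the two ``close'' regions are complementary rays overlapping in a controlled segment. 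I expect this to be the main obstacle, since making the overlap bound uniform requires coarse divergence of geodesics rather than mere slimness. I would obtain it either by a direct Morse argument — concatenating the near-side segments with the bounded jumps to form a quasi-geodesic whose excursion is controlled by the Morse gauge — or by transporting the configuration into one of the uniformly $\delta$-hyperbolic subspaces $X^{(N'')}_p$ of \cite{cordes-hume}, where boundedness of the set of points near all three sides of a triangle is classical. Once $p_{ab}$ is pinned to a bounded segment of $[a,b]$, every $x\in E_{K'}$ lies within $K'$ of that fixed segment, giving the desired diameter bound $L=L(N,K)$.

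For (\ref{ends eventually close}), I would fix the vertex $v=a$ and let $m\in[a,b]$ be the center point produced in (\ref{center exists}), so that $m\in E_{\delta_N}(a,b,c)\subseteq E_K(a,b,c)$; letting $q\in[a,c]$ satisfy $d(m,q)\le\delta_N$ and setting $p=m$ gives $d(p,q)\le\delta_N$ immediately. The two sub-rays $[p,a]\subseteq[a,b]$ and $[q,a]\subseteq[a,c]$ are $N'$-Morse (subsegments of $N$-Morse geodesics, via Lemma \ref{lem:morse triangles}) and are asymptotic to the common point $a$, so by the fellow-traveling fact they eventually stay uniformly close; applying $\delta$-slimness (Lemma \ref{lem:slim triangles}) to the ideal triangle with vertices $p,q,a$, whose finite side $[p,q]$ has length at most $\delta_N$, bounds the Hausdorff distance between $[p,a]$ and $[q,a]$ by a constant depending only on $N$. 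Tracking the constants through the slimness and fellow-traveling estimates — and, if necessary, choosing $m$ so that both $p$ and $q$ lie within $\delta_N$ of all three sides — yields the stated bound and places both $p$ and $q$ in $E_K(a,b,c)$.
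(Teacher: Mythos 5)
Your parts (\ref{center exists}) and (\ref{ends eventually close}) are essentially fine and in fact supply details that the paper dispatches in one line each (the connectedness/covering argument for existence, and the transition point near each ideal vertex); the only quibble is that in (\ref{ends eventually close}) your fellow-traveling-plus-slimness estimate will yield a Hausdorff bound depending on $N$ but not literally $\delta_N$, whereas the paper gets the stated constant by choosing $p$ at the transition point using continuity of $t \mapsto d(\alpha(t),[b,c])$. The genuine gap is in part (\ref{center bounded}). You correctly isolate the crucial claim --- that the set of points of $[a,b]$ lying within $2K'$ of both other sides is a segment of length bounded in terms of $N$ and $K$ alone --- and correctly observe that mere slimness of the ideal triangle does not give a \emph{uniform} bound, but you then do not prove it: you name two candidate strategies and execute neither. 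This claim is the entire content of the statement. Your first route (assembling the near-side segments and short jumps into a quasi-geodesic) is not obviously viable as stated: a broken path made of two geodesics joined by a short segment need not be a uniform quasi-geodesic, and arranging this is precisely where the Morse property has to do real work. Your second route (working inside the hyperbolic subspaces $X^{(N'')}_p$ of \cite{cordes-hume}) glosses over the fact that those spaces are hyperbolic only in the Gromov four-point sense and need not be geodesic, so the ``classical'' boundedness of centers of ideal triangles does not apply off the shelf; one must also check that a single such subspace contains all three sides of the ideal triangle.

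For comparison, the paper closes exactly this gap by a different reduction: it truncates the ideal triangle at each ideal vertex, using the hexagon construction from the proof of Lemma \ref{lem:slim triangles}, to produce a finite triangle $R$ whose sides lie within a uniformly bounded distance $K'$ (depending only on $N$) of the sides of $T$. Any two points of $E_K(a,b,c)$ are then $(K+K')$-centers of the finite slim triangle $R$, and Lemma 3.1.5 of \cite{Bowditch91}, which treats slim triangles with finite side lengths, bounds their distance in terms of $N$ and $K$. If you want to keep your outline, replace the unproved overlap claim with this truncation-plus-Bowditch step; your preliminary reduction via Lemma \ref{lem:equiv geodesics} to a single representative triangle is a worthwhile addition either way, since $E_K$ is defined by quantifying over all representative triangles.
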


\begin{proof}
	(\ref{center exists}): This follows from the Slim Triangle Property.  
		
	(\ref{center bounded}):  Lemma 3.1.5 of \cite{Bowditch91} proves this statement for triangles which are $\delta_N$-slim and have finite edge lengths. So what is left to show is that this holds for ideal $\delta_N$-slim triangles. 
	
	Let $p,q \in E_K(a,b,c)$. By Lemma \ref{lem:slim triangles} we can assume that $T(a,b,c)$ is $\delta_N$-slim. As in the proof of that lemma, we form the triangle $R$ with vertices $a',b',c'$ in $X$. The sides of $R$ are bounded distance from the sides of $T$ with the bound $K'$ depending only on $N$.  It follows  that $p$ and $q$ are within $K+K'$ of all three sides of $R$.  Thus, since the edge lengths of $R$ are finite, by Lemma 3.1.5 of \cite{Bowditch91} we know that the $d(p,q)$ is bounded where the bound depends only on $N$ and $K$. This completes the proof.

	(\ref{ends eventually close}): This follows from the continuity of the distance function and the fact that $T(a,b,c)$ is $\delta_N$-slim.
\end{proof}

We will refer to a point in $E_K(a,b,c)$ as a \emph{$K$-center} of $(a,b,c)$, or a \emph{coarse center} if $K$ is understood.

\section{Homeomorphisms induced by quasi-isometries}

Let $h:X \to Y$ be a quasi-isometry between proper geodesic spaces.  For a Morse ray $\alpha$ in $X$, the Morse property guarantees that the quasi-ray $h(\alpha)$ in $Y$ can be straightened to a Morse ray $\beta$ at bounded Hausdorff distance from  $h(\alpha)$.   In \cite{ChMu} and \cite{Co} the authors showed that the resulting map $\mbh: \mbX \to \mbY$ is a homeomorphism. In this section we will show that these homeomorphisms satisfy some additional properties.  

%\emph{We assume from now on that $\mbX$ contains at least three points.}

\subsection{Two-stable maps}\label{2-stable}
Recall that $\mbdX$ was defined in terms of a fixed basepoint $p$.  
While changing the basepoint does not change the set of points on the Morse boundary, it does change 
the Morse guage associated to any given point.
In this paper, on the other hand, we are concerned primarily with bi-infinite geodesics, rather than geodesic rays at a basepoint.
Let $\alpha$ be a bi-infinte, $N$-Morse geodesic in $X$.
While its endpoints $\alpha^+$ and $\alpha ^-$ are in $\mbX$, the rays from $x_0$ to these points may require much larger Morse guages, so $\alpha^+$ and $\alpha ^-$ need not lie in $\mbdX$.  

%For two points $a,b \in \overline X_*$ let $(a,b)$ denote the set of all geodesics from $a$ to $b$. 
Since $\mbX$ has at least 3 points and the bi-infinite geodesic between any two of these is Morse,  $\mbX^{(3,N)}$ is non-empty for $N$ sufficiently large. 
% \red{?? We will refer to 3-tuples $(a,b,c) \in \mbX^{(3,N)}$ as $N$-triangles.}

\begin{definition}  Let $X$ and $Y$ be proper, geodesic metric spaces. A map $f : \mbX \to \mbY$ is \emph{2-stable} if for every Morse gauge $N$, there exists a Morse gauge $N'$ such that $f$ maps $\mbX^{(2,N)}$ into $\mbY^{(2,N')}$.  Note that it follows that $f$ maps $\mbX^{(n,N)}$ into $\mbY^{(n,N')}$ for all $n \geq 2$.  
\end{definition}

In the setting of CAT(0) spaces, one can instead use the contracting property to define 2-stable. Recall that a geodesic $\alpha$ is \emph{$D$-contracting} if the projection on $\alpha$ of any metric ball $B$, with $B \cap \alpha = \emptyset$, has diameter at most $D$.  It is shown in \cite{CS} that in a CAT(0) space, given $D$, there exists $N$ such that if $\alpha$ is $D$-contracting, then it is $N$-Morse, and conversely, given $N$ there exists $D'$ such that if $\alpha$ is $N$-Morse then it is $D'$ contracting.  It follows that for $X,Y$ CAT(0), $f : \mbX \to \mbY$ is 2-stable if and only if for each $D$ there exists $D'$ such that $f$ maps bi-infinite $D$-contracting geodesics to bi-infinite $D'$-contracting geodesics.  

Now suppose $f : \mbX \to \mbY$  is a homeomorphism. Since a closed set in $\mbX$ is compact if and only if it is contained in $\mbdX$, for some $N$ (see Lemma 4.1 of \cite{cordes-durham}), it  must be the case that for each $N$, there exists a $N'$ such that $f$ maps  $\mbdX$ into $\mbeY$.  On the other hand, this is does not guarantee that $f$ is 2-stable as the following example shows. 
%\blue{This is true. Compact implies in $\mbdX$ for some $N$ is Lemma 4.1 in my paper with Matt Durham \cite{cordes-durham}. For the other direction: If $E$ is closed and in $\mbdX$ for some $N$, then we know it is itself compact because $\mbdX$ is. We know that for any $N'>N$ that there are continuous maps $\mbdX \hookrightarrow \partial_*^{N'}X$ which means $E$ is compact in each $ \partial_*^{(N')}X$ and thus the whole boundary.}

\begin{example} \label{not 2-stable} Let $X$ be the Euclidean plane $\R^2$ with a ray $r_{m,n}$ attached at each lattice point $(m,n) \in \Z^2 \subset \R^2$. View the plane as horizontal and the attached rays as vertical.   It is easy to see that the Morse boundary is the discrete set of the vertical rays. 

Since $X$ is CAT(0), we can use the contracting property in place of the Morse property.
For a bi-infinite geodesic between two boundary points $r_{m,n}$ and $r_{s,t}$,  the optimal contracting constant for the bi-infinite geodesic connecting them is given by the distance in the plane from $(m,n)$ to $(s,t)$.

Consider the homeomorphism $f: \mbX \to \mbX$ which interchanges $r_{n,0}$ and $r_{-n,0}$  and leaves all other points on the boundary fixed.  Let $\alpha_n$ be the bi-infinite geodesic from $r_{n,0}$ to 
$r_{n,1}$. Then for all $n$, $\alpha_n$ is 1-contracting, whereas after applying $f$,  the bi-infinite geodesic between the resulting points, $r_{-n,0}$ and $r_{n,1}$ is worse than $2n$-contracting. Thus, $f$ is not 2-stable.

One can promote this example to a space  with a cocompact group action.  Namely, let $X$ be the universal cover of a torus wedge a circle, $T^2 \vee S^1$.  View the flats in $X$ as horizontal and the edges covering the circle as vertical.  Choose a base flat $F$ and identify it with $\R^2$.  Let $e(n,m)$ denote the upward edge attached at $(n,m) \in F$.  Define  $f: \mbX \to \mbX$ by interchanging any ray from the origin passing through   $e(n,0)$ with the corresponding ray passing through $e(-n,0)$, and leaving the rest of the boundary fixed.  This again defines a homeomorphism on the boundary which, by the same argument as above, is not 2-stable. 
\end{example}

\subsection{Cross-ratios}\label{cross-ratios}

We begin by reviewing Paulin's definition of the cross-ratio.
For four points $a,b,c,d$ in a $\delta$-hyperbolic space $X$, Paulin defines the cross-ratio to be 
$[a,b,c,d]=\frac{1}{2} (d(a,d) + d(b,c) - d(a,b) - d(c,d))$.
He then extends this definition to $\partial X$ by taking limits over sequences of points approaching the boundary.  We will use a slightly different definition of the cross-ratio motivated by the following observation. 
Since triangles in $X$ are $\delta$-slim, there exist points $p$ and $q$ lying within $\delta$ all three sides of the triangles $(a,b,c)$ and $(a,c,d)$ respectively.  It is easy to see that (the absolute value of) Paulin's cross-ratio is approximately equal to $d(p,q)$; they differ by at most $4\delta$.  
Analogous points also exist for ideal triangles, namely course centers of $(a,b,c)$ and $(a,c,d)$, and give rise to  a definition of the cross-ratio in $\mbX$ that is  intrinsic to the boundary and easier to work with.

\begin{figure}[h]

\begin{tikzpicture}[thick]

\newlength\mylen

\tikzset{
bicolor/.style n args={3}{
  decoration={
    markings,
    mark=at position 0.5 with {
      \node[draw=none,inner sep=0pt,fill=none,text width=0pt,minimum size=0pt] {\global\setlength\mylen{\pgfdecoratedpathlength}};
    },
  },
  draw=#1,
  dash pattern=on #3\mylen off 1\mylen,
  preaction={decorate},
  postaction={
    draw=#2,
    dash pattern=on (1-#3)*\mylen off (#3)*\mylen,dash phase=(1-#3)*\mylen
  },
  }
}

\tikzset{
tricolor/.style n args={5}{
  decoration={
    markings,
    mark=at position 0.5 with {
      \node[draw=none,inner sep=0pt,fill=none,text width=0pt,minimum size=0pt] {\global\setlength\mylen{\pgfdecoratedpathlength}};
    },
  },
  draw=#1,
  dash pattern=on #4\mylen off \mylen,
  preaction={decorate},
  postaction={
    draw=#2,
    dash pattern=on (#5-#4)*\mylen off \mylen,dash phase=(-#4)*\mylen
  },
  postaction={
    draw=#3,
    dash pattern=on (1-#5)*\mylen off \mylen,dash phase=(-#5)*\mylen
    },
  }
}

	\node[scale=0.4,circle,fill](a) at (0,0) {};
	\node at (a) [anchor=north east]{$a$};
	\node[scale=0.4,circle,fill](b) at (2,5) {};
	\node at (b) [anchor=south east]{$b$};
	\node[scale=0.4,circle,fill](c) at (12,3) {};
	\node at (c) [anchor=south west]{$c$};
	\node[scale=0.4,circle,fill](d) at (11,-3) {};
	\node at (d) [anchor=north]{$d$};
	\node (up) at (0,0.2) {};
	\node (down) at (0,-0.2) {};
	\node (right) at (0.2,0) {};
	\node (left) at (-0.2,0) {};
	\draw[-] (a) -- (c) node[pos=0.25,scale=0.4] (p){} node[pos=0.65,scale=0.4] (q){};
	\draw[bicolor={cyan}{red}{0.45}] (a)  ..controls ($(p)+1.5*(up)+(left)$) .. (b) node[pos=0.23,anchor = south east]{$-$} node[pos=0.77,anchor = east]{$-$};
	\draw[tricolor={red}{black}{cyan}{0.29}{0.65}] (b) ..controls ($(p)+(up)$) .. (c) node[pos=0.2,anchor = west]{$+$} node[pos=0.92,anchor = south]{$+$};
	\draw[tricolor={cyan}{black}{red}{0.25}{0.58}] (a) ..controls ($(q)+2*(down)+(right)$) .. (d) node[pos=0.08,anchor = north]{$+$} node[pos=0.75,anchor = north east]{$+$};
	\draw[bicolor={red}{cyan}{0.58}] (d) ..controls ($(q)+(down)$) .. (c) node[pos=0.22,anchor = west]{$-$} node[pos=0.82,anchor = north]{$-$};

	\node at ($(p)+3*(up)$)[fill,scale=0.4,circle](c1){};
	\node at ($(q)+3*(down)$)[fill,circle,scale=0.4](c2){};

	\node at (c1)[anchor=south east]{$p$};
	\node at (c2)[anchor=north west]{$q$};

\end{tikzpicture}
\caption{Distance between centers is coarsely equal to Paulin's cross-ratio}
\label{fig:project}
\end{figure}

By the Slim Triangle Property (Lemma \ref{lem:slim triangles}), ideal triangles with vertices 
$(a,b,c) \in \mbX^{(3,N)}$ are $\delta_N$-slim, thus they contain points $p$ that lie within $\delta_N$ of all three sides.  By Lemma \ref{lem:E_K properties}, the set $E_{\delta_N}(a,b,c)$ of all such points is bounded.

\begin{definition}  Fix a Morse gauge $N$ and set  $K=\delta_N$.  
The \emph{cross-ratio} of a four-tuple $(a,b,c,d) \in \mbX^{(4,N)}$ is defined to be
$$ [a,b,c,d] = \pm \sup \{d(p,q) \mid p \in E_K(a,b,c),\,  q \in E_K(a,c,d)\} $$  
\end{definition}

Since we will only be concerned with the absolute value of the cross-ratio, we will not bother to specify the sign.  (Intuitively, the sign should depend on whether $p$ lies to the left or right of $q$ in Figure \ref{fig:project}, but making this precise is messy and of no use to us here.)
Note that interchanging $a$ with $c$, or $b$ with $d$, does not change the absolute value of the cross-ratio. 
Indeed, up to a uniform bound depending only on $N$, $|[a,b,c,d]|$ depends only on the pairing of $a$ with $c$, and $b$ with $d$.  To see this, approximate $(a,b,c,d)$ by points in the interior of $X$, and note that for interior points, Paulin's original formula for the cross-ratio differs from ours by at most $4\delta_N$.

\begin{lemma}\label{lem:choose centers}  Let $(a,b,c,d) \in \mbX^{(4,N)}$ and $K\geq \delta_N$.  Then there exists a constant $L=L(N,K)$, such that  for any choice of  $p \in E_K(a,b,c)$ and $q \in E_K(a,c,d),$
$$d(p,q) \leq  |[a,b,c,d]| \leq d(p,q) + 2L$$
\end{lemma}

\begin{proof}  This follows immediately  from Lemma \ref{lem:E_K properties}(\ref{center bounded}).
\end{proof}

\begin{remark}
As the reader may have noticed, $\delta_N$, and hence the definition of the cross-ratio, depends on the choice of $N$.  If one increases the Morse gauge to $N' >N$, then any choice of coarse centers $p$ and $q$ for $K=\delta_N$, are also coarse centers for $K'=\delta_{N'}$, hence by the lemma, the cross-ratio will change by at most a bounded amount.   Since we will primarily be concerned with bounding cross-ratios of 4-tuples $(a,b,c,d) \in \mbX^{(4,N)}$ for a fixed $N$, and most arguments will be done using a choice of coarse centers, this ambiguity will be of no concern.   

It is possible to modify our definition of the cross-ratio to make it independent of $N$ by choosing a
$K$ for each triple of vertices that depends on the vertices, but not on $N$.  For example, we could set
$$K(a,b,c) = 1 + \inf \{ k \mid E_{k}(a,b,c) \neq \emptyset \},$$ 
and then define the cross-ratio as the supremum of $d(p,q)$ where $p$ is a $K(a,b,c)$-center and $q$ is a $K(a,c,d)$-center.  For $(a,b,c,d) \in \mbX^{(4,N)}$, $E_{\delta_N}$ is always non-empty, so this definition of the cross-ratio would differ from the one above by a bounded amount with the bound depending only on $N$. 

In an earlier version of this paper \cite{ChMu}, where we restricted our attention to CAT(0) spaces, we took $p$ and $q$ to be projections of $b$ and $d$ on a geodesic $[a,c]$, rather than coarse centers, and used these to define the cross-ratio.   These projections, and hence this definition of the cross-ratio, are independent of $N$.  This works since in a CAT(0) space, Morse geodesics are (strongly) contracting and projections are well-defined (see \cite{Mu}).  Unfortunately, this approach does not generalize to Morse geodesics in more general geodesic metric spaces. 
\end{remark}

\begin{definition}  Let $X$ and $Y$ be geodesic metric spaces.  A homeomorphism $f: \mbX \to \mbY$ is  
\emph{$N$-quasi-mobius}  if there exists a continuous map 
 $\psi_N : [0,\infty) \to [0,\infty)$  such that for all 4-tuples $(\alpha,\alpha',\beta,\beta')$ in $\mbX^{(4,N)}$, 
$$ |[f(\alpha),f(\alpha'),f(\beta),f(\beta')]| \leq \psi_N(|[\alpha,\alpha',\beta,\beta']|). $$
We say that $f$ is \emph{quasi-mobius} if it is $N$-quasi-mobius for every $N$.   
\end{definition} 

We remark that one can always choose the functions $\psi_N$ to be non-decreasing.			
			
\begin{theorem}\label{forward} Let $h: X \to Y$ be a $(\lambda,\epsilon)$-quasi-isometry between two proper geodesic metric spaces.  Then the induced map $\mbh : \mbX \to \mbY$ is a 2-stable, quasi-mobius homeomorphism.  Moreover, the functions $\psi_N$ in the definition of quasi-mobius can all be taken to be linear with multiplicative constant $\lambda$.
 \end{theorem}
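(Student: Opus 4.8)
The plan is to treat the three assertions separately, taking for granted that $\mbh$ is a homeomorphism (established in \cite{ChMu, Co}). The single input I will rely on is the controlled quasi-isometry invariance of the Morse property that underlies the very construction of $\mbh$: if $\alpha$ is an $N$-Morse geodesic in $X$ and $h$ is $(\lambda,\epsilon)$-quasi-isometric, then $h(\alpha)$ is a $(\lambda,\epsilon)$-quasi-geodesic lying at Hausdorff distance at most some $D=D(N,\lambda,\epsilon)$ from an $N'$-Morse geodesic in $Y$, where the gauge $N'$ depends only on $N,\lambda,\epsilon$. Combined with Lemma \ref{lem:equiv geodesics}, any two bi-infinite geodesics in $Y$ with the same pair of endpoints are then $N''$-Morse for $N''$ depending only on $N'$.

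For 2-stability, I would take $(a,b)\in\mbX^{(2,N)}$ and an $N$-Morse bi-infinite geodesic $\alpha$ from $a$ to $b$. Its image $h(\alpha)$ is a $(\lambda,\epsilon)$-quasi-geodesic limiting to $f(a)=\mbh(a)$ and $f(b)=\mbh(b)$, so by the input above it lies close to an $N'$-Morse geodesic joining $f(a)$ to $f(b)$. By Lemma \ref{lem:equiv geodesics} every bi-infinite geodesic from $f(a)$ to $f(b)$ is then $N''$-Morse, i.e. $(f(a),f(b))\in\mbY^{(2,N'')}$, with $N''$ depending only on $N,\lambda,\epsilon$. This is exactly 2-stability.

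For the quasi-mobius estimate, fix $N$ and a $4$-tuple $(a,b,c,d)\in\mbX^{(4,N)}$, and choose coarse centers $p\in E_{\delta_N}(a,b,c)$ and $q\in E_{\delta_N}(a,c,d)$ in $X$; by Lemma \ref{lem:choose centers} (applied in $X$) we have $d_X(p,q)\le|[a,b,c,d]|$. The heart of the argument is to check that $h(p)$ and $h(q)$ are coarse centers of the image triples in $Y$. Mapping an ideal triangle $T(a,b,c)$ realizing $p$ across $h$, each of its three $N$-Morse sides lands within $D$ of an $N'$-Morse side of an ideal triangle $T(f(a),f(b),f(c))$; since $p$ lies within $\delta_N$ of all three sides of $T(a,b,c)$, its image $h(p)$ lies within $K':=\lambda\delta_N+\epsilon+D$ of all three sides of $T(f(a),f(b),f(c))$, so $h(p)\in E_{K'}(f(a),f(b),f(c))$, and likewise $h(q)\in E_{K'}(f(a),f(c),f(d))$. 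Enlarging $K'$ if necessary so that $K'\ge\delta_{N'}$, Lemma \ref{lem:choose centers} applied in $Y$ gives $|[f(a),f(b),f(c),f(d)]|\le d_Y(h(p),h(q))+2L'$ for an $L'=L(N',K')$ depending only on $N,\lambda,\epsilon$.

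Stringing these together yields the claimed linear bound: since $h$ is $(\lambda,\epsilon)$-quasi-isometric,
\[ |[f(a),f(b),f(c),f(d)]| \le d_Y(h(p),h(q)) + 2L' \le \lambda\, d_X(p,q) + \epsilon + 2L' \le \lambda\,|[a,b,c,d]| + (\epsilon + 2L'). \]
Thus $\mbh$ is $N$-quasi-mobius with $\psi_N(t)=\lambda t+(\epsilon+2L')$, which is linear with multiplicative constant $\lambda$ and depends only on $N$ (through $N'$, $K'$, $L'$); as this works for every $N$, $\mbh$ is quasi-mobius. I expect the main obstacle to be the bookkeeping in the previous paragraph: one must verify that the three image sides really bound a common ideal triangle whose gauge $N'$ is controlled purely by $N,\lambda,\epsilon$ (so that $\delta_{N'}$ and $L'$ do not secretly depend on the tuple), and that the additive slack from passing between $h(\alpha)$ and a genuine geodesic, and between $K'$-centers and $\delta_{N'}$-centers, is uniformly bounded — all of which follow from the controlled invariance input and Lemma \ref{lem:E_K properties}(\ref{center bounded}), but must be assembled carefully so as to keep the multiplicative constant equal to $\lambda$.
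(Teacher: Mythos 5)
Your proposal is correct and follows essentially the same route as the paper's proof: cite the homeomorphism and straightening results for 2-stability, show that the image of a coarse center is a $K'$-center of the image triangle with $K'$ controlled by $N,\lambda,\epsilon$, and then combine Lemma \ref{lem:choose centers} with the quasi-isometry inequality to get the linear bound with multiplicative constant $\lambda$. The only (harmless) difference is that you use $h(p),h(q)$ directly as the $K'$-centers in $Y$ rather than comparing them to freshly chosen centers $p',q'$, which slightly improves the additive constant.
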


\begin{proof}  The fact that $\mbh$ is a homeomorphism was proved by the first author and H.~Sultan for CAT(0) spaces in \cite{CS} and then generalized to arbitrary proper geodesic metric spaces by the second author in \cite{Co}.  The proof involves showing that for each Morse guage $N$,  there exists a Morse guage $N'$ such that the image of an $N$-Morse ray under the quasi-isometry $h$ can be ``straightened"  to an $N'$-Morse ray in $Y$.  The same proof applies to bi-infinite geodesics to show that $\mbh$ is 2-stable. 

Suppose $h$ is a $(\lambda, \epsilon)$-quasi-isometry.  To prove that $\mbh$ is quasi-mobius, first consider a triple $(a,b,c) \in \mbX^{(3,N)}$ and let $(a',b',c') \in \mbY^{(3,N')}$  be its image under $\mbh$.  Let  $T=T(a,b,c)$ be a representative triangle.  Applying $h$ to $T$ gives a quasi-triangle (a triangle whose sides are quasi-geodesic) in $Y$.  The sides can be straightened to geodesics to obtain a triangle $T'= T'(a',b',c')$ with $N'$-Morse sides.  The Morse property guarantees that there is a constant $C$, depending only on $N', \lambda, \epsilon$,  such that  $h(T)$ lies in the $C$-neighborhood of $T'$.  

Set $K=\delta_N$ and consider the image of $E_K(a,b,c)$ under $h$.  For any $x \in E_K(a,b,c)$,  $h(x)$ lies within $\lambda K +\epsilon$ of all three sides of $h(T)$ for some $T$ and hence within  $\lambda K +\epsilon+C$ of all three sides of $T'$.  Taking $K'=\max\{\delta_{N'}, \lambda K +\epsilon+C\}$, we conclude that the image of $E_K(a,b,c)$ under $h$ lies in $E_{K'}(a',b',c')$.  By Lemma \ref{lem:E_K properties}(\ref{center bounded}), the diameter of $E_K(a,b,c)$ is bounded by a constant $L=L(N,K)$ and the diameter of $E_{K'}(a',b',c')$ is bounded by a constant $L'=L'(N',K')$.  

Now consider a 4-tuple $(a,b,c,d) \in X^{(4,D)}$ and let $(a',b',c',d') \in Y^{(4,D')}$ be its image under $\mbh$.  Let $p$ and $q$ be $K$-centers of $(a,b,c)$ and $(a,c,d)$ respectively.  Likewise, let $p',q'$ be $K'$-centers of $(a',b',c')$ and $(a',c',d')$.    By Lemma \ref{lem:choose centers},  $|[a,b,c,d]|$ and $d(p,q)$ differ by at most $2L$  and $|[a',b',c',d']|$ and $d(p',q')$ differ by at most $2L'$.  Hence to prove $\mbh$ is quasi-mobius, it suffices to bound $d(p',q')$ as a function of $d(p,q)$.

As observed above, $h(p) \in E_{K'}(a',b',c')$, so $d(p',h(p)) < L'$, and likewise, $d(q',h(q)) < L'$.  Thus,
$$d(p',q') < d(h(p),h(q)) + 2L' \leq \lambda d(p,q) + \epsilon +2L'.$$
It follows that $\mbh$ is quasi-mobius with linear bounding functions with multiplicative constant $\lambda$.
\end{proof}

\section{Quasi-isometries induced by homeomorphisms}

The  goal of this section is to prove a converse of Theorem \ref{forward}.  
Namely, if $f: \mbX \to \mbY$ is a homeomorphism such that $f$ and $f^{-1}$ are both 2-stable and quasi-mobius, then $f$ is induced by a quasi-isometry $h: X \to Y$.  We continue to assume that $\mbX$ (and hence $\mbY$) contains at least three points.  In addition, to prove the converse statement we will need to assume that both $X$ and $Y$ are cocompact,  that is, there exists a group $G$ acting cocompactly on $X$ and a group $H$ acting cocompactly on $Y$.

\subsection{Extending \texorpdfstring{$f$}{f} to the interior}
Given a homeomorphism $f:\mbX \to \mbY$, we need to extend it to a map $h: X \to Y$.
Let us outline the steps involved in defining such an $h$. 

 Choose $N$ such that $ \mbX^{(3,N)}$ is non-empty.  We begin by defining a map  
$$ \pi^N_X: \mbX^{(3,N)} \to X.$$  
Fix  $K = \delta_N$.
Define  $\pi^N_X (a,b,c) = p$ where $p$ is a choice of $K$-center for $(a,b,c)$, 
that is, $p \in E_K(a,b,c)$.  
When $N$ is fixed, we will generally omit it from the notation and denote the map by $\pi_X$.  We will refer to
$\pi_X (a,b,c)$ as the \emph{projection} of $(a,b,c)$.

If $G$ is a group acting cocompactly by isometries  on $X$, then the induced action of $G$ on $\mbX$ preserves the Morse constants of bi-inifinte geodesics and we can choose $\pi_X$ to be equivariant with respect to the induced action.  Choose a basepoint $x_0$ that lies in the image of $\pi_X$.  Since the action of $G$ on $X$ is cocompact, there is a ball $B(x_0,R)$ whose $G$-translates cover $X$, so every point in $X$ lies within $R$ of $\pi_X(a,b,c)$ for some $(a,b,c)$.

Now assume that $f : \mbX \to \mbY$ is a 2-stable homeomorphism and say
$f (\mbX^{(2,N)}) \subseteq \mbY^{(2,N')}$. As observed above, it follows that  $f (\mbX^{(n,N)}) \subseteq \mbY^{(n,N')}$ for all $n \geq 2$.  Set $\pi_X=\pi^N_X$  and let 
$\pi_Y = \pi^{N'}_Y : \mbY^{(N',3)} \to Y$ be the analogous map for $Y$.  We will define $h(x)$ to be a point in the set
$$\Pi(x)= \pi_Y\circ f \circ \pi_X^{-1}(B(x,R)) \subset Y.$$ 
To have any control on this function $h$, we must first prove that for any $x$,  $\Pi(x)$ has bounded diameter, depending only on $N$.

\begin{lemma} \label{flips}   There exists a constant $C_1$ depending only on $N$ such that for any  
$(a,b,c,d) \in \mbX^{(4,N)}$, the absolute value of one of the three cross-ratios $[a,b,c,d]$, $[a,c,b,d]$, 
$[c,a,b,d]$ is less than $C_1$.
\end{lemma}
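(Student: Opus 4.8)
The plan is to reduce the statement to the classical four-point condition for hyperbolic spaces. Recall Paulin's formula: for four interior points $w,x,y,z$ one sets $[w,x,y,z]_P = \frac{1}{2}(d(w,z)+d(x,y)-d(w,x)-d(y,z))$. Writing $S_1 = d(a,b)+d(c,d)$, $S_2 = d(a,c)+d(b,d)$, $S_3 = d(a,d)+d(b,c)$, a direct computation gives that the three cross-ratios $[a,b,c,d]_P$, $[a,c,b,d]_P$, $[c,a,b,d]_P$ equal $\frac{1}{2}(S_3-S_1)$, $\frac{1}{2}(S_3-S_2)$, $\frac{1}{2}(S_1-S_2)$ respectively, so their absolute values are exactly the three pairwise differences $\frac{1}{2}|S_i-S_j|$ (matching the three pairings of the four points, and hence the three listed cross-ratios). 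The four-point definition of $\delta$-hyperbolicity asserts precisely that the two largest of $S_1,S_2,S_3$ differ by at most $2\delta$; consequently one of the three cross-ratios is at most $\delta$. Thus, for four interior points lying in a region that is $\delta$-hyperbolic with $\delta$ depending only on $N$, the claim holds with $C_1\approx\delta$.

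\textbf{Placing approximants in a uniformly hyperbolic subspace.}
The work is to put suitable interior approximations of $a,b,c,d$ inside a single uniformly hyperbolic subspace. Given $(a,b,c,d)\in\mbX^{(4,N)}$, I would fix any point $p$ on the bi-infinite $N$-Morse geodesic $[a,c]$. Then $[p,a]$ and $[p,c]$ are subrays of $[a,c]$, hence $N_1$-Morse with $N_1$ depending only on $N$ (subsegments of $N$-Morse geodesics are uniformly Morse, as noted in the proof of Lemma \ref{lem:slim triangles}). The point is that $p$ need not lie anywhere near $[a,b]$ or $[a,d]$, so the remaining two rays must be bootstrapped rather than obtained from proximity: for each $x\in\{b,d\}$ the triangle $T(p,a,x)$ has the $N_1$-Morse side $[p,a]$ and the $N$-Morse side $[a,x]$, so by the Morse Triangle Property (Lemma \ref{lem:morse triangles}) the ray $[p,x]$ is $N_2$-Morse with $N_2$ depending only on $N$. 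Hence there is a gauge $N''$, depending only on $N$, with all four rays $[p,a],[p,b],[p,c],[p,d]$ being $N''$-Morse. Setting $a_n=[p,a](n)$, $b_n=[p,b](n)$, etc., their initial segments from $p$ are uniformly Morse, so all four points lie in $X^{(N'')}_p$. By Proposition 3.2 of \cite{cordes-hume}, $X^{(N'')}_p$ is $\delta$-hyperbolic with $\delta=8N''(3,0)$, a constant depending only on $N$, and the relevant distances are the ambient distances in $X$.

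\textbf{Transfer to the boundary and conclusion.}
Applying the four-point condition in $X^{(N'')}_p$ to $a_n,b_n,c_n,d_n$, for every $n$ one of the three Paulin cross-ratios $[a_n,b_n,c_n,d_n]_P$, $[a_n,c_n,b_n,d_n]_P$, $[c_n,a_n,b_n,d_n]_P$ is at most $\delta$. It remains to compare these with the boundary cross-ratios. As observed following the definition of the cross-ratio, for interior points Paulin's formula differs from the distance between the two relevant centers by at most $4\delta_N$; and as $a_n\to a$, $b_n\to b$, $c_n\to c$, $d_n\to d$ along the Morse rays, the interior centers of $(a_n,b_n,c_n)$ and $(a_n,c_n,d_n)$ stay within a distance depending only on $N$ of coarse centers of $(a,b,c)$ and $(a,c,d)$, by Lemma \ref{lem:E_K properties}(\ref{center bounded}). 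Hence by Lemma \ref{lem:choose centers} the Paulin cross-ratio $[a_n,b_n,c_n,d_n]_P$ differs from the boundary cross-ratio $[a,b,c,d]$ by at most a constant depending only on $N$ once $n$ is large, and similarly for the other two pairings. Combining these estimates, one of $|[a,b,c,d]|$, $|[a,c,b,d]|$, $|[c,a,b,d]|$ is bounded by $\delta$ plus a constant depending only on $N$; taking $C_1$ to be this bound proves the lemma.

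\textbf{Main obstacle.}
I expect the crux to be the second paragraph: manufacturing a single basepoint from which all four boundary points are seen along rays whose Morse gauges are controlled purely by $N$. The delicate point is exactly that $p$ lies near $[a,c]$ but not near $[a,b]$ or $[a,d]$, so the rays $[p,b]$ and $[p,d]$ cannot be controlled by proximity and must instead be produced from the already-controlled ray $[p,a]$ via the Morse Triangle Property. Care is also needed in the limiting step to guarantee that the interior centers converge to genuine coarse centers of the ideal triangles, which is where Lemma \ref{lem:E_K properties}(\ref{center bounded}) and Lemma \ref{lem:choose centers} are essential.
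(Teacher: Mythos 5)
Your argument is correct in outline but takes a genuinely different route from the paper. The paper proves Lemma \ref{flips} by a direct coarse-center argument: assuming $|[a,b,c,d]|$ is large, it orders $E_K(a,b,c)$ and $E_K(a,c,d)$ along $(a,c)$, uses Lemma \ref{lem:E_K properties}(\ref{ends eventually close}) to transport centers onto the sides $(a,d)$ and $(b,c)$, and then applies slimness of the triangle $(b,c,d)$ to show that either $(a,c,d)$ or $(a,b,d)$ acquires a coarse center near a coarse center of $(a,b,c)$, forcing one of the other two cross-ratios to be small. You instead reduce to the Gromov four-point condition: your identification of the three cross-ratios with the pairwise differences of $S_1,S_2,S_3$ is exact, and your device of basing all four rays at a point $p\in(a,c)$ and bootstrapping $[p,b]$, $[p,d]$ via Lemma \ref{lem:morse triangles} correctly places the approximants $a_n,\dots,d_n$ in the uniformly hyperbolic subspace $X^{(N'')}_p$ of \cite{cordes-hume}, where the four-point inequality gives the result for interior points. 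What your route buys is conceptual clarity (the lemma really is the four-point condition in disguise) at the price of an interior-to-boundary transfer that the paper's proof avoids entirely. That transfer is the one place you should add detail: citing Lemma \ref{lem:E_K properties}(\ref{center bounded}) alone does not show that a center of the finite triangle $(a_n,b_n,c_n)$ is close to a coarse center of the ideal triangle $(a,b,c)$; you first need a fellow-traveling argument (splitting the quadrilateral with vertices $a_n,a,b,b_n$ into two slim Morse triangles, and similarly for the other sides) to show that such a center lies in $E_{K'}(a,b,c)$ for some $K'$ depending only on $N$, after which part (\ref{center bounded}) and Lemma \ref{lem:choose centers} do give the comparison with $|[a,b,c,d]|$. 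The paper itself asserts the equivalent comparison between Paulin's formula and the center-distance only in a remark, without proof, so this is a fixable omission rather than a flaw, but in a self-contained write-up it is the step that carries the real work.
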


\begin{figure}

\begin{tikzpicture}[thick]

	\node (up) at (0,0.2) {};
	\node (down) at (0,-0.2) {};
	\node (right) at (0.2,0) {};
	\node (left) at (-0.2,0) {};

	\node (a) at (0,0) {};
	\node at (a) [anchor=north east]{$a$};
	\node (b) at (2,5) {};
	\node at ($(b)+3*(up)+(left)$) {$b$};
	\node (c) at (12,3) {};
	\node at (c) [anchor=south west]{$c$};
	\node (d) at (11,-3) {};
	\node at (d) [anchor=north]{$d$};
	\draw[<->] (a) -- (c) node[pos=0.25,scale=0.4,circle,fill] (p){} node[pos=0.65,scale=0.4,circle,fill] (q){};
	\draw[<->] ($(a)+2*(up)+0.5*(right)$)  ..controls ($(p)+1.5*(up)+2.5*(left)$) .. ($(b)+1.5*(left)$) node [pos=0.45,scale=0.4,circle,fill](p2){} node at(p2)[anchor=south east]{$p''$};
	\draw[<->] ($(b)+1.5*(right)+(up)$) ..controls ($(p)+1.5*(up)+2*(right)$) .. ($(c)+(up)+(left)$) node [pos=0.3,scale=0.4,circle,fill](p1){} node at(p1)[anchor=south west]{$p'$};
	\draw[<->] ($(a)+2*(down)+(right)$) ..controls ($(q)+3*(down)+2*(left)$) .. ($(d)+2*(left) + 0.5*(up)$) node [pos=0.175,circle,fill,scale=0.4](r){} node at(r)[anchor=north west]{$r$} node [pos=0.6,scale=0.4,circle,fill](q1){} node at (q1)[anchor = north east]{$q'$};
	\draw[<->] ($(d)+(right)+(up)$) ..controls ($(q)+(down) + 3*(right)$) .. ($(c)+1.5*(down)+0.5*(left)$);
	\draw[dashed,<->] ($(b)+0.5*(up)$) .. controls ($(p)+20*(down)+ 2*(right)$) and ($(q)+20*(up) + 10*(left)$) .. (d) node [pos=0.2,scale=0.4,circle,fill](r1){} node at (r1)[anchor=north east]{$r'$};

	%\node at ($(p)+3*(up)$)[fill,scale=0.4,circle](c1){};
	%\node at ($(q)+3*(down)$)[fill,circle,scale=0.4](c2){};

	\node at (p)[anchor=south east]{$p$};
	\node at (q)[anchor=north west]{$q$};

\end{tikzpicture}
\caption{At least one cross-ratio is ``small".}
\label{fig:small cross-ratio}
\end{figure}
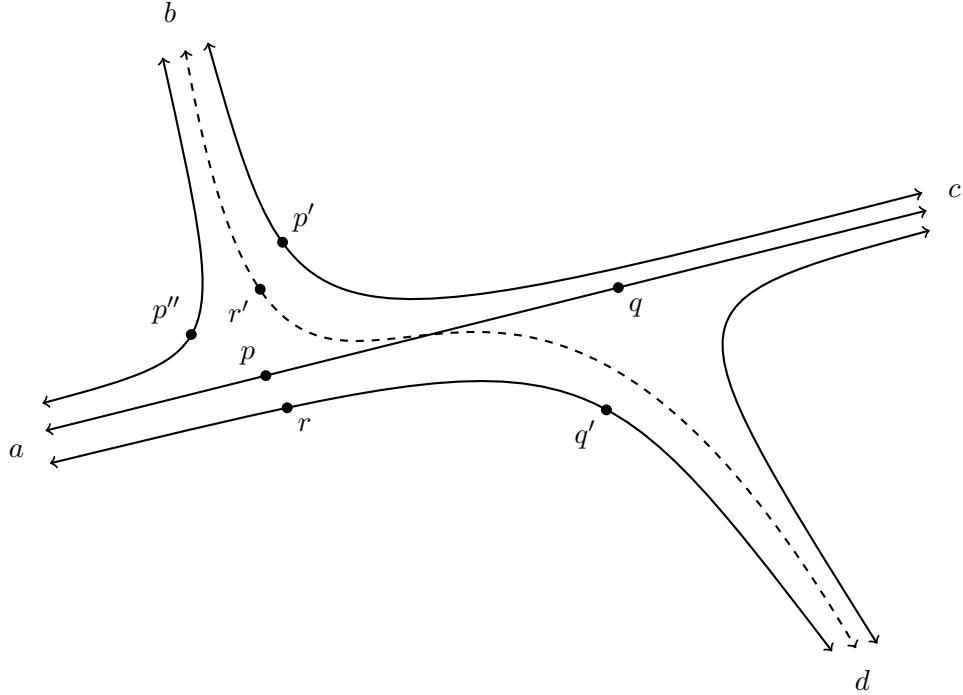

\begin{proof}
	Let $K=\delta_N$.  Let $L$ be the bound on $\diam(E_K)$ given by Lemma \ref{lem:E_K properties}(\ref{center bounded}) and $L'$ the bound on $\diam(E_{L+2K})$. We may assume $L' > L$ (otherwise replace $L'$ by $L+1$). Note that since $L$ depends only on $N$,  $L'$ also only depends only on $N$.  Set $C_1=2L'$.

	If $|[a,b,c,d]|< C_1$ then we are done. For the rest of the proof, suppose that $|[a,b,c,d]|\geq C_1$. 
Note that in this case $E_K(a,b,c) \cap E_K(a,c,d)$ is empty since for any $p \in E_K(a,b,c)$ and $q \in E_K(a,c,d)$,
$$ d(p,q) \geq |[a,b,c,d]|-2L \geq C_1 -2L > 0.$$
Interchanging $a$ and $c$ if necessary, we may assume that $E_K(a,b,c)$ lies ``closer" to $a$ than $E_K(a,c,d)$.  That is, for any points $p \in E_K(a,b,c) \cap [a,c]$ and $q \in E_K(a,c,d) \cap [a,c]$,
$p$ lies on the ray $[a,q]$. See Figure \ref{fig:small cross-ratio}.

By  Lemma \ref{lem:E_K properties}(\ref{ends eventually close}),  there exist points $p, p' \in E_{K}(a,b,c)$ with  $p \in [a,c]$ and $p' \in [b,c]$, such that $[p,c]$ and $[p',c]$ have Hausdorff distance at most $K$.  Likewise, there are points $q, q' \in E_K(a,c,d)$ with $q \in [a,c]$ and $q' \in [a,d]$ such that $[a,q]$ and $[a,q']$ have Hausdorff distance at most $K$. In particular, since $p \in [a,q]$, there exists a point $r \in [a,q'] \subset [a,d]$ with $d(p,r) \leq K$. 

Now consider the triangle $(b,c,d)$.  By the slim triangle property, $p' \in [b,c]$ lies within $K$ of a point $r'$ on one of the other two sides.  Say $r' \in [c,d]$.   Since $d(p,p') \leq L$, the points $p,r,r'$ all lie within $L+2K$ of each other, hence they lie in $E_{L+2K}(a,c,d)$.  Since $p$ also lies in $E_K(a,b,c)$, Lemma \ref{lem:choose centers}   implies that  $|[a,b,c,d]| \leq 2L' = C_1$.  But this contradicts our original assumption.

Thus we must have $r' \in [b,d]$.  Choose a point $p'' \in [a,b] \cap E_K(a,b,c)$.  Then $p'', r, r'$ all lie within $L+2K$ of each other, hence they lie in $E_{L+2K}(a,b,d)$.   Since $p''$ also lies in $E_K(a,b,c)$, this implies that
$|[a,c,b,d]| \leq 2L' = C_1$.

Recalling that we initially allowed $a, c$ to be interchanged if necessary, we have thus shown that one of $|[a,b,c,d]|,
|[a,c,b,d]|$, or $|[c,a,b,d]|$ is less than $C_1$.
\end{proof}

By Lemma \ref{lem:choose centers}, we see that if $|[a,b,c,d]|$ is small, then any pair of $K$-centers of the triangles $(a,b,c)$ and $(a,c,d)$ are close.  Thus, the lemma above says that for $(a,b,c,d) \in \mbX^{(4,N)}$, replacing one of the vertices of the triangle $(a,b,c)$ by $d$, causes only a small  change in $\pi_X(a,b,c)$.

\begin{proposition} \label{bndd expansion1}   Let $f : \mbX \to \mbY$ be a 2-stable, quasi-mobius homeomorphism.  Then for any Morse guage $N$, there exists a function $C_N: \R^+ \to \R^+$ such that 
for all $(a,b,c), (u,v,w)$ in $\mbX^{(3,N)}$, 
$$d_X(\pi_X(a,b,c), \pi_X(u,v,w)) \leq R \Longrightarrow\\  d_Y(\pi_Y(f(a),f(b),f(c)), \pi_Y(f(u),f(v),f(w)) \leq C_N(R).$$
\end{proposition}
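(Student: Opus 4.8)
The plan is to express everything in terms of cross-ratios and transport bounds from $X$ to $Y$ using the quasi-mobius hypothesis. Write $p=\pi_X(a,b,c)$, $p'=\pi_X(u,v,w)$, so $d_X(p,p')\le R$, and set $q_1=\pi_Y(f(a),f(b),f(c))$, $q_2=\pi_Y(f(u),f(v),f(w))$; the goal is to bound $d_Y(q_1,q_2)$. The basic mechanism is that two ideal triangles sharing an edge have centers whose distance is, up to a bounded error, a cross-ratio: if triples $\{x_1,x_2,x_3\}$ and $\{x_1,x_3,x_4\}$ differ in a single vertex, then by Lemma \ref{lem:choose centers} any centers of the two triangles lie at distance at most $|[x_1,x_2,x_3,x_4]|$. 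Hence if we can join $(a,b,c)$ to $(u,v,w)$ by a chain of triples, each obtained from the previous by a single vertex swap, then $d_Y(q_1,q_2)$ telescopes into a sum of distances between consecutive $f$-images of centers, each at most the corresponding cross-ratio in $Y$, which in turn is at most $\psi(\,\cdot\,)$ of the corresponding cross-ratio in $X$ by quasi-mobius. Since $\psi$ need not be subadditive, it is essential that (i) the chain have boundedly many steps and (ii) each individual $X$-cross-ratio be bounded in terms of $R$ and $N$; equivalently, every intermediate center must lie within some $M=M(R,N)$ of $p$.

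First I would check that the cross-ratios are even defined. The hypotheses give only that $[a,b],[a,c],[b,c]$ and $[u,v],[u,w],[v,w]$ are $N$-Morse; the cross geodesics $[s,t]$ with $s\in\{a,b,c\}$, $t\in\{u,v,w\}$ need not be. However, since $p$ is a $\delta_N$-center of $(a,b,c)$ and $p'$ a $\delta_N$-center of $(u,v,w)$ with $d_X(p,p')\le R$, each of the six points is joined to $p$ by a Morse geodesic whose gauge is controlled by $N$ and $R$. Applying Lemma \ref{lem:morse triangles} then shows that every pairwise geodesic among the six points is $\tilde N$-Morse for some $\tilde N=\tilde N(N,R)$, so all the $4$-tuples appearing in the chain lie in $\mbX^{(4,\tilde N)}$; by $2$-stability their images lie in $\mbY^{(4,\tilde N')}$, and the quasi-mobius bound for the gauge $\tilde N$ applies.

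The heart of the argument is building the chain so that every intermediate center stays near $p$. Note that a triple all three of whose sides pass within $M$ of $p$ has $p\in E_M$, so by Lemma \ref{lem:E_K properties}(\ref{center bounded}) all of its centers lie within $L(\tilde N,M)$ of $p$. Because $p,p'$ are centers within $R$ of each other, all within-triple sides of $(a,b,c)$ and of $(u,v,w)$ pass within $R+\delta_N$ of $p$; the only sides that can stray far are the cross geodesics $[s,t]$. Call such a pair \emph{aligned} if $[s,t]$ passes farther than $\beta=\beta(R,N)$ from $p$. A short Gromov-product estimate shows that if $s$ were aligned with two points $t,t'$ of the opposite triple then $[t,t']$ would also pass far from $p$, contradicting that $t,t'$ lie in a common spread triple; hence \emph{alignment is a partial matching} between $\{a,b,c\}$ and $\{u,v,w\}$. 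Lemma \ref{flips} guarantees that at each step some single-vertex swap keeps the center close; the refinement needed here is to carry these out consistently. Extend the alignment matching to a bijection $\sigma:\{a,b,c\}\to\{u,v,w\}$ and pass through the triples $\{a,b,c\}\to\{\sigma(a),b,c\}\to\{\sigma(a),\sigma(b),c\}\to\{\sigma(a),\sigma(b),\sigma(c)\}$. Since $\sigma$ respects the matching, no intermediate triple ever contains an aligned pair, so each of its sides passes within $O(\beta)$ of $p$, placing every intermediate center within $M=M(R,N)$ of $p$. Each of the three transition cross-ratios in $X$ is then at most $2M+\mathrm{const}$, and summing the three quasi-mobius bounds yields $d_Y(q_1,q_2)\le 3\,\psi_{\tilde N}(2M+\mathrm{const})=:C_N(R)$.

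\textbf{Main obstacle.} The only genuine difficulty is controlling the intermediate centers: a careless vertex swap can produce a triangle whose center is arbitrarily far from $p$ even though $d_X(p,p')\le R$, namely whenever a vertex of one triple is aligned with a vertex of the other, and this error is truly unbounded, so it cannot simply be absorbed. The resolution is the observation that alignments form a matching, which lets us swap each original vertex directly for its aligned partner and so never hold an aligned pair. Making precise the passage between "sides pass near $p$" and "centers lie near $p$" (via $E_M$ and Lemma \ref{lem:E_K properties}), and pinning down the dependence of $\beta$, $\tilde N$, and $M$ on $R$ and $N$, is the routine but slightly delicate bookkeeping that remains.
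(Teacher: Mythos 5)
Your proposal is correct and shares the paper's overall skeleton --- first upgrade all fifteen pairwise geodesics among $\{a,b,c,u,v,w\}$ to a common Morse gauge $\tilde N(N,R)$ using the fact that the two projections are $R$-close, then join $(a,b,c)$ to $(u,v,w)$ by a short chain of single-vertex swaps whose cross-ratios in $X$ are bounded in terms of $R$ and $N$, and finally transport each bound through the quasi-mobius function and Lemma \ref{lem:choose centers} --- but the mechanism you use to build the chain is genuinely different. The paper invokes Lemma \ref{flips} as a black box (for every $4$-tuple at least one of the three swaps is ``small'') and then runs a combinatorial case analysis producing a chain of up to five triangles, in which the final link is controlled not because its cross-ratio is intrinsically small but because the accumulated projections remain within $R+3C_2$ of one another. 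You instead identify in advance exactly which cross pairs $(s,t)$ are dangerous (those whose geodesic misses a $\beta$-ball about $p$), prove by a slim-triangle argument that these form a partial matching between the two triples, and order the three swaps along a bijection extending that matching so that every intermediate triple has all three sides passing near $p$, hence has all of its coarse centers within $L(\tilde N,M)$ of $p$ by Lemma \ref{lem:E_K properties}(\ref{center bounded}). This buys a chain of exactly four triples in which every center is explicitly localized near $p$, at the cost of proving the matching property, which carries roughly the same geometric content as Lemma \ref{flips} (both amount to reading off the tree-approximation combinatorics of the configuration); your route also makes Lemma \ref{flips} itself unnecessary for this proposition. The bookkeeping caveats you flag are the right ones; the only step I would insist you write out is the comparison between the $\delta_{N'}$-centers defining $\pi_Y$ and the $\delta_{\tilde N'}$-centers appearing in the cross-ratios of the image chain, which contributes one further additive constant, again via Lemma \ref{lem:E_K properties}(\ref{center bounded}).
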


\begin{proof} Let $T_1=(a,b,c)$ and $T_2=(u,v,w)$ and set $x=\pi_X(a,b,c)$ and $y= \pi_X(u,v,w)$.  Fix a constant $M\geq 0$.  We first show that there exists a Morse gauge $N'$, depending only on $N$ and $M$, such that if $d(x,y) \leq M$, then any geodesic from a vertex of $T_1$ to a vertex of $T_2$ is $N'$-Morse.  Say, for example, that $\gamma \in (a,u)$ is such a geodesic.  By Lemma \ref{lem:E_K properties}, $x$ (resp. $y$) is uniformly bounded distance, say distance $C$, from the sides of any triangle representing $T_1$ (resp. $T_2$).  It follows that
$(a,x)$ is in the $C$-neighborhood of any geodesic in $(a,b)$, and $(u,y)$ is in the $C$-neighborhood of any geodesic in $(u,v)$.
Assuming $d(x,y) \leq R$ this also implies that $(u,x)$ is in the $(C+R)$-neighborhood of $(u,v)$.  It follows that there is a Morse gauge $N_1$, depending only on $C$ and $R$, such that $(a,x)$ and $(u,x)$ are $N_1$-Morse, and hence by the Morse Triangles Property, a Morse gauge $N'$ such that $[a,u]$ is $N'$-Morse.  The same argument applied to other pairs of vertices shows that $(a,b,c,u,v,w) \in \mbX^{(N',6)}$.  

By Lemma \ref{flips}, there is a constant $C_1$ such that for any $(p,q,r,s) \in \mbX^{(N',4)}$, some permutation of the first three points $p,q,r$ results in a cross-ratio with absolute value bounded by $C_1$.  We will say that such a cross-ratio is ``small".   It follows from Lemma \ref{lem:choose centers}, that if $|[p,q,r,s]|$ is small, then  $\pi_X(p,q,r)$ and $\pi_X(p,s,r)$ are uniformly close, say at distance $< C_2$.  In this case, we call the move from $(p,q,r)$ to $(p,s,r)$ a ``small flip".  So for any $(p,q,r,s) \in \mbX^{(N',4)}$, there exists a small flip that replaces one vertex of the triangle $(p,q,r)$ by $s$. 

To prove the proposition, we begin by showing that applying at most 3 small flips to the triangles $T_1=(a,b,c)$ and $T_2=(u,v,w)$, we obtain a pair of triangles that share an edge.   We first do a flip that replaces a vertex of $(a,b,c)$ by $v$. Permuting $(a,b,c)$ if necessary, we get a small flip from $(a,b,c)$ to $(a,v,c)$.  Next, we replace a vertex of $(a,v,c)$ by $w$.   If the flip to either $(w,v,c)$ or $(a,v,w)$ is small, we are done since these share an edge with $(u,v,w)$.  

So suppose only the flip from $(a,v,c)$ to $(a,w,c)$ is small.
In this case, consider the flips of $(u,v,w)$ obtained by replacing a vertex by $a$.  All of the resulting triangles $(a,v,w)$, $(u,a,w)$, $(u,v,a)$ share an edge with either $(a,v,c)$ or $(a,w,c)$, so whichever one of these flips is small, we arrive at the desired pair of adjacent triangles. 

Since the projections $x,y$ of $T_1,T_2$ are at distance at most $R$, the projections of the resulting pair of adjacent triangles are at distance at most $R'=R+3C_2$.  
In summary, there is a sequence of at most 5 triangles with vertices in $\{a,b,c,u,v,w\}$, beginning with $T_1$ and ending with $T_2$ such that consecutive triangles share an  edge and have projections at distance at most $R'$.  

Now apply $f$ to this sequence of triangles. Since $f$ is 2-stable, $f(a,b,c,u,v,w)$ lies in $\mbY^{(N'',6)}$ for some Morse gauge $N''$.   
Using Lemma \ref{lem:choose centers} and the quasi-mobius function $\Psi_{N'}$, one gets a bound on the distance between the projections in $Y$ of consecutive triangles, and hence a bound on the distance between the projections of $f(T_1)$ and $f(T_2)$.  This proves the proposition.
\end{proof}

In particular, it follows that for any $x \in X$, the set $\pi_Y(f(\pi_X^{-1}(B(x,R)))) \subset Y$ has bounded diameter with the bound, say $M$, depending only on the choice of  $N, N'$ and $R$.  Set
$$\Pi (x) = \pi_Y(f(\pi_X^{-1}(B(x,R)))) $$
and define a map $h : X \to Y$ by choosing a point
$$ h(x) \in \Pi (x). $$ 
We call such an $h$ an extension of $f$ to $X$.
While the definition of $\Pi$ depends on a choice of $N, N'$ and $R$, increasing any of these constants just increases the size of $\Pi$, so the same choice of $h(x)$ works for these larger constants.

\begin{proposition} \label{bndd expansion2} Fix $N, N'$, and $R$ as above. Then for any $Q \geq 0$, there exists $C_3$ such that 
$$d_X(x,y) \leq Q \Longrightarrow  d_Y(h(x), h(y)) \leq C_3.$$
\end{proposition}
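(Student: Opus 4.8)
The plan is to deduce the statement directly from Proposition \ref{bndd expansion1} by a single application of the triangle inequality; essentially all of the content already lives in that proposition, and here I only need to track how the cocompactness radius $R$ used to define $\Pi$ interacts with the hypothesis bound $Q$.

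First I would unpack the definition of $h$. By construction $h(x) \in \Pi(x) = \pi_Y(f(\pi_X^{-1}(B(x,R))))$, so there is a triple $T_1 = (a,b,c) \in \mbX^{(3,N)}$ with $\pi_X(T_1) \in B(x,R)$ and $h(x) = \pi_Y(f(T_1))$; since $f$ is 2-stable we have $f(T_1) \in \mbY^{(3,N')}$, so $\pi_Y(f(T_1))$ is indeed defined. Choosing similarly a triple $T_2 = (u,v,w)$ realizing $h(y)$, the triangle inequality gives
$$ d_X(\pi_X(T_1), \pi_X(T_2)) \leq d_X(\pi_X(T_1), x) + d_X(x,y) + d_X(y, \pi_X(T_2)) \leq R + Q + R = 2R + Q. $$

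Next I would apply Proposition \ref{bndd expansion1}. Its function $C_N$ is defined for every nonnegative input, so I evaluate it at $2R + Q$: since the projections of $T_1$ and $T_2$ lie within $2R+Q$ of one another in $X$, the proposition bounds the distance between the projections of their images,
$$ d_Y(h(x), h(y)) = d_Y(\pi_Y(f(T_1)), \pi_Y(f(T_2))) \leq C_N(2R+Q), $$
and setting $C_3 = C_N(2R+Q)$ finishes the proof. I do not expect any genuine obstacle here; the one point deserving a moment's care is purely notational, namely that the symbol ``$R$'' appearing as the free hypothesis variable in Proposition \ref{bndd expansion1} is an arbitrary distance bound, distinct from the fixed cocompactness radius $R$ entering the definition of $\Pi$, so that substituting $2R + Q$ is legitimate.
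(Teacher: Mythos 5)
Your proof is correct and follows essentially the same route as the paper: pick triples realizing $h(x)$ and $h(y)$, bound the distance between their projections by $Q+2R$ via the triangle inequality, and apply Proposition \ref{bndd expansion1}. The only (harmless) difference is that you use the specific triple with $h(x)=\pi_Y(f(T_1))$ exactly, whereas the paper takes an arbitrary admissible triple and pays an extra $2M$ for the diameter of $\Pi(x)$, arriving at $C_3 = C_N(Q+2R)+2M$ instead of your slightly cleaner $C_N(Q+2R)$.
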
  

\begin{proof}  Let $(a,b,c)$ and $(u,v,w)$ be points in $\mbX^{(3,N)}$ and whose projections $\pi_X(a,b,c)$ and $\pi_X(u,v,w)$ lie  in $B(x,R)$ and $B(y,R)$ respectively.  Suppose  $d_X(x,y) \leq Q$ and hence $d_X(\pi_X(a,b,c), \pi_X(u,v,w)) \leq Q+2R$.  Then by Proposition \ref{bndd expansion1}, there exists $C = C_N(Q+2R)$ 
such that  
$$d_Y(\pi_Y(f(a),f(b),f(c)), \pi_Y(f(u),f(v),f(w)) \leq C.$$
Now $\pi_Y(f(a),f(b),f(c))$ is an element of $\Pi(x)$ which is a set of diameter at most $M$, so  $h(x)$ lies within $M$ of  $\pi_Y(f(a),f(b),f(c))$.  Similarly, $h(y)$ lies within $M$ of  $\pi_Y(f(u),f(v),f(w))$.
We conlude that  $d_Y(h(x), h(y)) \leq C + 2M$.  
\end{proof}

\subsection{Main Theorem}  We are now ready to prove our main theorem.

\begin{theorem}\label{reverse} Let $X$ and $Y$ be proper, cocompact, geodesic metric spaces
%Assume there exist cocompact actions $G \curvearrowright X$ and $H  \curvearrowright Y$, 
and assume $\mbX$ has at least 3 points.
Suppose $f : \mbX \to \mbY$ is a 2-stable, quasi-mobius homeomorphism, and likewise for $f^{-1}$.  Then there exists a quasi-isometry  $h: X \to Y$  with $\partial_*h=f$.
\end{theorem}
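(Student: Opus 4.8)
The preceding two subsections already produce a candidate map $h\colon X\to Y$ and, in Propositions \ref{bndd expansion1} and \ref{bndd expansion2}, control its expansion on bounded scales. My plan is to (i) promote this to a genuine coarse Lipschitz bound, (ii) construct a coarse inverse to $h$ by running the same construction on $f^{-1}$, (iii) deduce that $h$ is a quasi-isometry, and (iv) identify the induced boundary map $\partial_* h$ with $f$. For (i), I would upgrade Proposition \ref{bndd expansion2} to a global bound by chaining: given $x,y\in X$, subdivide a geodesic from $x$ to $y$ into at most $d_X(x,y)+1$ segments of length at most $1$ and apply Proposition \ref{bndd expansion2} with $Q=1$ to each consecutive pair, so that the triangle inequality gives $d_Y(h(x),h(y))\le C\,d_X(x,y)+C$. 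Thus $h$ is coarse Lipschitz.

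Since $f^{-1}$ is also $2$-stable and quasi-mobius and $Y$ is cocompact, the preceding construction applies verbatim to $f^{-1}$ and yields a map $g\colon Y\to X$, again coarse Lipschitz by the chaining argument. The crucial point is that $g$ is a coarse inverse of $h$. Fix $x\in X$ and a witnessing triple $(a,b,c)\in\mbX^{(3,N)}$ with $\pi_X(a,b,c)\in B(x,R)$, so that $h(x)$ lies uniformly close to $\pi_Y(f(a),f(b),f(c))$. Provided the Morse gauges used to build $g$ are chosen large enough (replace each by a suitable maximum so that the image gauge of $f$ is dominated, and so that coarse centers for nested gauges remain uniformly close, by Lemma \ref{lem:E_K properties}), the triple $(f(a),f(b),f(c))$ is itself a legitimate witness for $h(x)$ in the construction of $g$. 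Hence $g(h(x))$ lies uniformly close to $\pi_X(f^{-1}f(a),f^{-1}f(b),f^{-1}f(c))=\pi_X(a,b,c)$, which is within $R$ of $x$; thus $d_X(g(h(x)),x)$ is uniformly bounded, and symmetrically $d_Y(h(g(y)),y)$ is uniformly bounded. A coarse Lipschitz map with a coarse Lipschitz coarse inverse is a quasi-isometry: coarse surjectivity is immediate from $h\circ g\approx\mathrm{id}_Y$, and composing the coarse Lipschitz bound for $g$ with $g\circ h\approx\mathrm{id}_X$ yields the lower bound $d_Y(h(x),h(x'))\ge \tfrac1\lambda d_X(x,x')-\epsilon$. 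This proves $h$ is a quasi-isometry.

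It remains to prove $\partial_* h=f$. Fix $a\in\mbX$ and a Morse ray $\alpha$ representing it; since $h$ is a quasi-isometry, $h\circ\alpha$ converges to $\partial_* h(a)$, so it suffices to show it also converges to $f(a)$. Using cocompactness and equivariance of $\pi_X$, set $x_0=\pi_X(a_0,b_0,c_0)$ for a fixed triple and choose $g_t\in G$ with $g_tx_0$ tracking $\alpha(t)$. The translated triples $(g_ta_0,g_tb_0,g_tc_0)$ keep a fixed Morse gauge, and their coarse centers $g_tx_0\approx\alpha(t)$ march out along $\alpha$ toward $a$; invoking the North--South--type convergence dynamics of the $G$-action on $\mbX$, all three vertices converge to $a$. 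Applying $f$ and using continuity, the vertices of the image triples converge to $f(a)$, so any coarse center of these image triples also converges to $f(a)$. But by the defining property of $h$ these coarse centers are uniformly close to $h(g_tx_0)$, hence to $h(\alpha(t))$. Therefore $h(\alpha(t))\to f(a)$, forcing $\partial_* h(a)=f(a)$.

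The main obstacle is this last step: reconciling the purely coarse, center-based construction of $h$ with the topology at infinity, and in particular controlling how interior coarse centers converge to a prescribed boundary point. This is delicate because Morse boundary points may be isolated, so one cannot approach $a$ by nearby boundary points; the convergence-dynamics input for the group action is precisely what keeps the argument uniform in this case. A secondary but pervasive difficulty is the bookkeeping of Morse gauges throughout, ensuring at every stage that the gauges produced by $2$-stability of $f$ and of $f^{-1}$ are compatible, so that witnessing triples remain valid inputs and coarse centers for different gauges stay uniformly close.
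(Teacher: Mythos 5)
Your steps (i)--(iii) follow essentially the same route as the paper: the coarse Lipschitz bound by chaining Proposition \ref{bndd expansion2} along a subdivided geodesic (the paper chains along orbit points instead, which is an immaterial difference), and the coarse-inverse argument by enlarging the gauges and radii so that $(f(a),f(b),f(c))$ is a legitimate witnessing triple for $h(x)$ in the construction of the extension of $f^{-1}$. Those parts are fine.

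The genuine gap is in step (iv), and it is exactly where you flag the difficulty. You invoke ``North--South--type convergence dynamics of the $G$-action on $\mbX$'' to conclude that all three vertices $g_ta_0, g_tb_0, g_tc_0$ converge to $a$. No such dynamical statement is available here: the $g_t$ are an arbitrary sequence of group elements chosen only so that $g_tx_0$ tracks $\alpha(t)$ --- they need not be powers of a single Morse element --- and for a general proper cocompact space there is no established convergence-group or North--South behavior of $G$ on the Morse boundary that applies to such a sequence. Moreover the conclusion you want is stronger than what is true: there is no reason all three vertices should converge to $a$ (one of them may well limit elsewhere). The paper's actual argument replaces this dynamical input with a compactness-plus-coarse-center argument: since each stratum $\partial_*^{N''}X_p$ is compact, one passes to a subsequence so that $a_i\to A$, $b_i\to B$, $c_i\to C$, and then observes that if $A,B,C$ were pairwise distinct, the coarse centers $x_i$ would have to converge to a point of the bounded set $E_K(A,B,C)$ in the \emph{interior} of $X$, contradicting $x_i\to p\in\mbX$. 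Hence two of the limits coincide, and since $x_i$ stays uniformly close to geodesics $[a_i,b_i]$, that common limit must be $p$. Two converging vertices are then enough to push the image coarse centers $y_i$ to $f(p)$ after applying the homeomorphism $f$. Without substituting some argument of this kind for the unproved dynamical claim, your step (iv) does not go through.
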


\begin{proof}  
Choose $N'$ so that $f(\mbX^{(2,N)}) \subseteq \mbY^{(2,N')}$ and $f^{-1}(\mbY^{(2,N)}) \subseteq \mbX^{(2,N')}$.  Say  $G \curvearrowright X$ and $H  \curvearrowright Y$ are cocompact group actions and choose
 $R>0$ so that both $X$ and $Y$ are covered by the $R$-neighborhood of an orbit.  Choose projection maps $\pi_X$ and $\pi_Y$ that are equivariant with respect to the actions of $G$ and $H$ respectively.
 
Let $h$ be an extension of $f$ to $X$ and let  $h^{-1}$ to be an extension of $f^{-1}$ to $Y$ compatible with these constants.  That is,  $h(x) \in \Pi(x)$ and $h^{-1}(y) \in \Pi(y)$, where
$$\Pi(x)= \pi_Y(f (\pi_X^{-1}(B(x,R)))) $$ 
$$\Pi(y)= \pi_X(f^{-1}(\pi_Y^{-1}(B(y,R))))$$

To prove that $h$ is a quasi-isometry, it suffices to show, 
\begin{itemize}
\item[(i)] for all $x,y \in X$ and $p,q \in Y$ there are linear bounds $d_Y(h(x),h(y)) \leq A\, d_X(x,y) + B$
and $d_Y(h^{-1}(p),h^{-1}(q)) \leq A' d_Y(p,q) + B'$, and
\item[(ii)]  $h$ and $h^{-1}$ are quasi-inverses.
\end{itemize}

For (i), let $S$ be a finite generating set for $G$.  Choose a base point $x_0 \in X$.
Approximate the geodesic from $x$ to $y$ by a sequence of orbit points $x, g_0x_0, g_1x_0, \dots g_nx_0, y$ such that $g_{i+1}=g_is_i$ for some generator $s_i \in S$.  Now map this sequence by $h$ into $Y$.  Since the distance between consecutive points is bounded, Proposition \ref{bndd expansion2} implies that there exists $C_3$ such that 
$d_Y(h(x),h(y)) \leq C_3\,(n + 2)$.  Since the inclusion of $G$ into $X$ as the orbit of $x_0$ is a quasi-isometry, there exists $\lambda, \epsilon$ such that $n=d_G(g_0,g_n) \leq \lambda d_X(x,y) + \epsilon$, so 
$$d_Y(h(x),h(y)) \leq C_3\lambda\, d_X(x,y) + C_3 (\epsilon + 2).$$
An analogous argument for $h^{-1}$ gives an upper bound on $d_X(h^{-1}(p),h^{-1}(q))$ as a linear function of $d_Y(p,q)$.

Next we prove that $h$ and $h^{-1}$ are quasi-inverses.  Let $M$ denote an upper bound on the diameter of $\Pi(x)$.
Say $N''$ is such that $f^{-1}(\mbY^{(2,N')}) \subseteq \mbX^{(2,N'')}$. Choose $R' \geq \max\{R,M\}$ and note that $\Pi(x) \subset B(h(x), R')$ 
for all $x$.  Let  $\pi'_X= \pi^{N''}_X$ and set
$$\Pi'(y)= \pi'_X(f^{-1}(\pi_Y^{-1}(B(y,R')))). $$ 
Since $\Pi'$ is obtained from $\Pi$ simply by increasing the constants, $\Pi(y) \subseteq \Pi'(y)$ for all $y$, and  $\Pi'(y)$ has uniformly bounded diameter, say bounded by $M'$. 

Now suppose $y=h(x)$.   Then for any  $(a,b,c) \in \mbX^{(3,N)}$ whose projection $z=\pi_X(a,b,c)$ lies in $B(x,R)$, 
we have $\pi_Y(f(a,b,c)) \in \Pi(x) \subset B(y,R')$, and hence $z$ and $h^{-1}(y)$ both lie in $\Pi'(y)$.  Thus,
$$d(x,h^{-1}\circ h(x)) = d(x,h^{-1}(y)) \leq d(x,z) + d(z, h^{-1}(y)) \leq R + M'.$$
An analogous argument shows that  $h^{-1}\circ h$ is also bounded distance from the identity map. 

It remains to show that $\mbh =f$. Choose a basepoint $x \in X$ such that  $x = \pi_X(a,b,c)$ for some triple $(a,b,c) \in \mbX^{(N,3)}$.  Let $p$ be a point in $\mbX$ and let $\alpha$ be a geodesic ray from $x$ to $p$, so $\alpha$ is $N'$-Morse for some $N' \geq N$.   Approximate $\alpha$ by a sequence of orbit points $\{g_ix\}$  converging to $p$.  Set $x_i=g_ix$, so $x_i$  is the projection of the triple $(a_i, b_i, c_i) = (g_ia, g_ib, g_ic)$.  Note that there exists $N''$ such that the geodesic ray from $x$ to any vertex in any one of these ideal triangles is $N''$-contracting.  This follows from the Morse Triangle Property since the triangle $T(x,x_i,a_i)$ has two sides, $[x,x_i]$ and $[x_i,a_i]$, which stay close to $N'$-Morse rays, and similarly for $b_i$ and $c_i$.  

Consider the sequences $(a_i),(b_i)$ and $(c_i)$.  
Since $\partial_* X^{N''}$ is compact, passing to a subsequence if necessary, all three sequences converge.  Say 
they converge to points $A,B,C$ respectively.  We claim that two of these points must be equal.  If not, then $(A,B,C)$ lies in $\mbX^{(N'',3)}$, and the sequence $(x_i)$ converges to a point in $E_K(A,B,C)$.  But this is impossible since $(x_i)$ converges to the point $p$ on the boundary.  So suppose $A=B$.  Since all of the $x_i$ lie uniformly bounded distance from some geodesics from $a_i$ to $b_i$, the sequence $(x_i)$ must also converge to this same point and we conclude that $p=A=B$.

Since $f: \mbX \to \mbY$ is a homeomorphism, it follows that two of the sequences  $\{f(a_i)\}, \{f(b_i)\},\{f(c_i)\}$ converge to $f(p)$ and hence the projections $\{y_i\}$ of these triangles also converge to $f(p)$.  The point $p$ is represented by the geodesic ray $\alpha$, so $\partial_* h(p)$ is represented by a geodesic straightening of 
$h(\alpha)$.  Since $y_i$  lies uniformly bounded distance from $h(x_i)$, and hence also from $h(\alpha)$, we conclude that $\partial_* h(p) = \lim y_i = f(p)$.
\end{proof}

Combining Theorems \ref{forward} and \ref{reverse} we thus have
\begin{theorem}\label{main} Let $X$ and $Y$ be proper, cocompact geodesic metric spaces with at least 3 points in their Morse boundaries.   A homeomorphism $f : \mbX \to \mbY$ is induced by a quasi-isometry  $h : X \to Y$ if and only if $f$  and $f^{-1}$ are 2-stable and quasi-mobius.  
\end{theorem}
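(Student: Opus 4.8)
The plan is to obtain Theorem~\ref{main} by assembling the two halves already established: Theorem~\ref{forward} supplies the ``only if'' direction for a single quasi-isometry, and Theorem~\ref{reverse} supplies the converse. The combination is essentially formal, with the one point requiring care being a bookkeeping argument about quasi-inverses, so that the hypotheses imposed on $f$ and on $f^{-1}$ are treated on equal footing.

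For the ``only if'' direction, I would suppose $f = \partial_* h$ for some quasi-isometry $h : X \to Y$. Theorem~\ref{forward} then immediately shows that $f$ is 2-stable and quasi-mobius. To deduce the same for $f^{-1}$, I would invoke the standard fact that every quasi-isometry $h$ admits a quasi-inverse $\bar h : Y \to X$, that is, a quasi-isometry for which $\bar h \circ h$ and $h \circ \bar h$ lie at bounded distance from $\mathrm{id}_X$ and $\mathrm{id}_Y$ respectively. The construction of the induced boundary map in \cite{CS, Co} is functorial up to this equivalence: maps at bounded Hausdorff distance straighten the same quasi-rays and hence induce the same homeomorphism of Morse boundaries, and the boundary map of a composite quasi-isometry is the composite of the boundary maps. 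Consequently $\partial_* \bar h \circ \partial_* h = \partial_*(\bar h \circ h) = \mathrm{id}_{\mbX}$, and symmetrically on the other side, so $\partial_* \bar h = (\partial_* h)^{-1} = f^{-1}$. Applying Theorem~\ref{forward} to the quasi-isometry $\bar h$ then shows that $f^{-1}$ is also 2-stable and quasi-mobius, completing this direction.

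For the ``if'' direction, I would suppose that $f$ and $f^{-1}$ are both 2-stable and quasi-mobius. Since $X$ and $Y$ are proper, cocompact, and geodesic, and $\mbX$ (hence also $\mbY$) contains at least three points, the hypotheses of Theorem~\ref{reverse} are met verbatim. That theorem directly furnishes a quasi-isometry $h : X \to Y$ with $\partial_* h = f$, so $f$ is induced by a quasi-isometry, as required.

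Because the substantive analysis is carried out entirely within Theorems~\ref{forward} and \ref{reverse}, the assembly itself poses no real obstacle. The only step I would flag as needing explicit justification is the identification $\partial_* \bar h = f^{-1}$, which is exactly what licenses us to derive the hypotheses on $f^{-1}$ from those on $f$; this rests on the functoriality of the assignment $h \mapsto \partial_* h$ and on the invariance of the induced boundary map under bounded perturbation, both of which are established in the cited works.
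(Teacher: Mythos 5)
Your proposal is correct and follows essentially the same route as the paper, which obtains Theorem~\ref{main} simply by combining Theorems~\ref{forward} and~\ref{reverse}. The one detail you spell out that the paper leaves implicit --- passing to a quasi-inverse $\bar h$ and using functoriality of $\partial_*$ to transfer the 2-stable and quasi-mobius properties to $f^{-1}$ --- is exactly the right justification for that step.
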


\begin{remark} In \cite{CaMa}, Cashen and Mackay define an alternate topology on the Morse boundary that has the advantage that it is metrizable.  Denote this boundary by $\partial_{CM}X$.  We claim that any continuous map $f : \partial_{CM}X \to \partial_{CM}Y$ that is 1-stable (i.e., for each $N$ there exists $N'$ such that  $f(\mbdX) \subseteq \partial_*^{N'}Y$)
 is also continuous when viewed as a map from $\mbX$ to $\mbY$.  This follows from the fact that the inclusion of $\mbdX$ into $\partial_{CM}X$ is a topological embedding, by Corollary 6.2 of  \cite{CaMa}, and hence $f : \mbdX \to \partial_*^{N'}Y$ is continuous for each $N$.  Thus, Theorem \ref{main}  also applies to any homeomorphism 
$f : \partial_{CM}X \to \partial_{CM}Y$ for which both $f$ and $f^{-1}$ are 1-stable.
\end{remark}

We conclude by noting that the homeomorphisms described in Example \ref{not 2-stable} that are not 2-stable, also fail to be quasi-mobius.  For example, setting $m=n+1$, the cross-ratio
$$|[r_{n,0},r_{m,0},r_{n,1},r_{m,1}]|=1$$
whereas after applying $f$ we get a cross-ratio of
$$|[r_{-n,0},r_{-m,0},r_{n,1},r_{m,1}]| > 2n-1$$

It seems reasonable ask to whether this is always the case, at least in the CAT(0) setting.
\begin{question}  Let $X$, $Y$ be as in the theorem and $f: \mbX \to \mbY$ a homeomorphism.  Does $f$ quasi-mobius imply that $f$ is 2-stable? 
\end{question}

\bibliographystyle{utphys}
\bibliography{quasi-mobius}

\end{document}